\newtheorem{theorem}{Theorem}
\newtheorem{lemma}{Lemma}
\newtheorem{corollary}{Corollary}
\newtheorem{proposition}{Proposition}
\newtheorem{remark}{Remark}
\newtheorem{definition}{Definition}
\newtheorem{example}{Example}
\DeclareMathOperator{\cone}{cone}
\DeclareMathOperator{\intr}{int}
\DeclareMathOperator{\bdr}{bdr}
\DeclareMathOperator{\spa}{sp}
\DeclareMathOperator{\ddm}{dim}
\DeclareMathOperator{\argmin}{argmin}
\DeclareMathOperator{\core}{core}
\DeclareMathOperator{\rng}{rng}
\DeclareMathOperator{\ray}{ray}
\DeclareMathOperator{\relintr}{relint}
\newcommand{\lang}{\langle}
\newcommand{\rang}{\rangle}
\newcommand{\R}{\mathbb R}
\newenvironment{proof}{{\noindent\bf Proof.}}{\hfill$\Box$\\}
\begin{document}

\title{Mutually polar retractions on convex cones}
\author{A. B. N\'emeth\\Faculty of Mathematics and Computer Science\\Babe\c s 
Bolyai University, Str. Kog\u alniceanu nr. 1-3\\RO-400084
Cluj-Napoca, Romania\\email: nemab@math.ubbcluj.ro}
\date{}
\maketitle

\begin{abstract} 

Two retractions $Q$  and $R$ on closed convex cones $M$ and
respectively $N$ of a Banach space
are called mutually polar if $Q+R=I$ and $QR=RQ=0.$
This note investigates the
existence of a pair of mutually polar retractions
for given cones $M$ and $N$.
It is shown that if $\ddm N=1$ (or $\ddm M =1$) then the
retractions are subadditive with respect to the order
relation their cone ranges endow.

\end{abstract}

\section{Introduction}

If $X$ is a nonempty set, then the mapping $Q:X \to X$ is said \emph{idempotent}, if $Q^2=Q$.
The aim of this note is to investigate idempotent mappings with real 
Banach space codomains and convex cone invariant sets.

\begin{definition}\label{retract}
Let $X$ be a Banach space. The mapping $T:\,X\to X$ is called
\emph{retraction} if:
\begin{enumerate}
\item [(i)] It is a continuous idempotent mapping;
\item [(ii)] It is \emph{positive
homogeneous}, that is, $T(tx)=tTx$ for every $x\in X$ and every $t\in \R_+=[0,+\infty)$;
\item [(iii)] $T(X)=\rng T$ is a non-empty, non-zero closed convex cone.
\item [(iv)] $Tx\in \bdr \rng T$ for any $x\in X\setminus \rng T.$
\end{enumerate}
\end{definition} 

\begin{definition}\label{muture}
Let $X$ be a Banach space, $0$ its zero mapping and $I$ its identity mapping. 
The mappings $Q,\,R:\,X\to X$ are called
\emph{mutually polar retractions} if 
\begin{enumerate}
\item [(i)] $Q$ and $R$ are retractions, 
\item[(ii)] $Q+R=I$,
\item[(iii)] $QR=RQ=0$.
\end{enumerate}
\end{definition}

 We will say that $Q$ and $R$ 
are \emph{polar of each other}.

(For the detailed terminology and examples see the next section.)

\begin{remark}\label{proper}

{\bf In our note \cite{NemethNemeth2020} the  retraction $Q$ onto a cone
is said \emph{proper} if the pair $Q$ and $I-Q$ are mutually
polar retractions.} 

For technical reasons, instead of a proper retraction, we will use the notion 
of a pair of mutually polar retractions. {\bf We use tacitly this equivalence}
in our proofs, by applying the results of the above cited note .

\end{remark}

While searching conditions for the existence of
asymmetric vector norms, in the recent note \cite{NemethNemeth2020}, we 
considered mutually polar retractions, related to the order
relations their cone ranges endow. 

The order theoretic restrictions seem
to be severe and restrict substantially the existence of mutually polar 
retractions. We will see that regardless of the order theoretic restrictions, 
the class of mutually polar retractions is substantially larger.

The main result of this note is to construct mutually polar
retractions with given cone ranges for a reasonable general class
of pairs of cones, and to relate our construction to order theoretic
restrictions on retractions. The notion of the \emph{transversal
mutually polar retractions} is introduced. If one of the cone ranges of a 
mutually polar pair of retractions is one dimensional, then they are 
transversal and both of them are subadditive
with respect to the order relation their cone ranges endow. 

In Section 2 we will introduce our terminology and will give important
examples of mutually polar retractions from the literature.

In Sections 3-5 the transversal mutually polar retractions are defined
and some of their important properties are proved.

Section 6 is concerned with some order theoretic properties of the retractions. 

Subsection 6.1 contains the proof of one of the main results of our note.
The proof is strongly related to the ideas and techniques in 
\cite{NemethNemeth2020}, some of which are reproduced there.

To keep the spirit of the note, we complete our exposition with
some of the main results in \cite{NemethNemeth2020}, which are reproduced 
without proofs in Subsections 6.2 and 6.3.


\section{Terminology and preliminaries }

Let $X$ be a real vector space.

The nonempty set $K\subset X$ is called a \emph{convex cone} if 
\begin{enumerate}
	\item[(i)] $\lambda x\in K$, for all $x\in K$ and $\lambda \in \R_+$ and if 
	\item[(ii)] $x+y\in K$, for all $x,y\in K$. 
\end{enumerate}

A convex cone $K$ is called \emph{pointed} if $K\cap(-K)=\{0\}$. 

A convex cone is called \emph{generating} if $K-K=X$.
 
If $X$ is a Banach space, then
a closed, pointed, convex cone with non-empty interior is called \emph{proper}.

The \emph{relation $\leq$ defined by the pointed convex cone $K$} is given by $x\leq y$ if and only if 
$y-x\in K$. Particularly, we have $K=\{x\in X:0\leq x\}$. The relation $\leq$ is an \emph{order relation}, that 
is, it is reflexive, transitive and antisymmetric; it is \emph{translation invariant}, that is, $x+z\leq y+z$, 
$\forall x,y,z\in X$ with $x\leq y$; and it is \emph{scale invariant}, that is, $\lambda x\leq\lambda y$, 
$\forall x,y\in X$ with $x\leq y$ and $\lambda \in \R_+$.

Conversely, for every $\leq$ scale invariant, translation invariant and antisymmetric order relation in $X$, there is a pointed 
convex cone $K$, defined by $K=\{x\in X:0\leq x\}$, such that $x\leq y$ if and only if $y-x\in K$. The cone $K$ 
is called the \emph{positive cone of $X$} and $(X,\leq)$ (or $(X,K)$) is called an \emph{ordered vector space}; we use
also the notation $\leq =\leq_K.$ 

The mapping $R:(X,\leq)\to (X,\leq)$ is said to be \emph{isotone} if $x\leq y\,\Rightarrow Rx\leq Ry$, and \emph{subadditive} if $R(x+y)\leq Rx+Ry$, for any $x,\,y\in X$.

The ordered vector space $(X,\leq)$ is called \emph{lattice ordered} if for every  $x,y\in X$, there exists 
$x\vee y:=\sup\{x,y\}$. In this case the positive cone $K$ is called a \emph{lattice cone}.
A lattice ordered vector space is called a \emph{Riesz space}. 
Denote $x^+=0\vee x$ and $x^-=0\vee (-x)$. Then, $x=x^+-x^-$, $x^+$ is 
called the \emph{positive part} of $x$ and $x^-$ is called the \emph{negative part} of $x$. 
The \emph{absolute value} of $x$ is defined by $|x|=x^++x^-$. The mapping 
$x\mapsto x^+$ is called the \emph{positive part mapping}.


\begin{example}\label{latt}

Let $(X,\leq)$ be an ordered Banach space with $K$ its closed positive cone.
The following assertions are part of the classical vector
lattice theory (see e.g. \cite{LuxemburgZaanen1971} or \cite{Schaefer1974}.)
 
\begin{enumerate}

\item Using the properties of the positive part operator we can see that
$^+$ is a proper retraction.

\item The operator $^+: X \rightarrow K$ is obviously idempotent.

\item We have $(tx)^+=tx^+\,\forall t\in \R_+,\, \forall x\in X$. 

\item Let $x^-= (-x)^+$. Then $x=x^+-x^-$, that is, $^+-^-=I$.

\item $(I-^+)^+=0$.

\item The mapping $-^- = I-^+$ is a proper retraction onto $-K$ .

\end{enumerate}

{\bf Hence, $^+$ and $-^-$ are mutually polar retractions with
the cone range $K$ and $-K$ respectively.}

Beside the above properties, the positive part operator $^+$

6. is \emph{isotone}, i.e., $x\leq_K y \Rightarrow x^+\leq_K y^+$;

7. is \emph{subadditive}, i.e., $(x+y)^+\leq_K x^++y^+,\;\forall \,x,\,y \in X$
(from the definition of the supremum).

\end{example}

In the particular case of $X=H$, where $(H,\lang,\rang)$ is a separable Hilbert
space of scalar product $\lang,\rang$, we will need some further notions.
Let $K\subseteq H$ be a closed convex cone. Recall that 
$$K^\circ=\{x\in H:\langle x,y\rangle\leq 0,\,\forall y\in K\}$$
is called the \emph{polar cone} of $K$. The cone $K^\circ$ is closed and convex, and if $K$ is generating, then $K^\circ$ is 
pointed (this is the case for example if $K$ is latticial).

\begin{example}\label{proj}

Let $P:H\to K$ be the \emph{projection mapping} onto the closed convex cone 
$K$, that is, the mapping defined by  
$$Px=\argmin \{\|x-y\|:y\in K\}.$$ 

The projection mapping $P$ is a proper retraction. This is the consequence of
the following theorem proved in \cite{Moreau1962}.

\begin{theorem} [Moreau]
	Let $H$ be a Hilbert space, $K,L\subset H$ two mutually polar closed 
	convex cones in $H$. Then, the following statements are equivalent:
	\begin{enumerate}
		\item[(i)] $z=x+y,~x\in K,~y\in L$ and $\lang x,y\rang=0$,
		\item[(ii)] $x=P_K(z)$ and $y=P_L(z)$.
	\end{enumerate}
\end{theorem}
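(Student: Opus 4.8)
The plan is to reduce everything to the variational characterization of the metric projection onto a closed convex cone, together with the bipolar relations $L=K^\circ$ and $K=L^\circ$ that are built into the hypothesis that $K$ and $L$ are mutually polar.

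First I would record the standard characterization of the projection onto a closed convex set: for a nonempty closed convex $C\subseteq H$, the point $x=P_C(z)$ is the unique $x\in C$ satisfying the variational inequality $\lang z-x,w-x\rang\leq 0$ for all $w\in C$, existence and uniqueness coming from the Hilbert projection theorem (strict convexity of $\|\cdot\|^2$ together with completeness of $H$). Specializing $C=K$ to a cone and exploiting that $0\in K$ and $2x\in K$, I would test the inequality at $w=0$ and $w=2x$ to obtain $\lang z-x,x\rang=0$; the remaining inequalities then read $\lang z-x,w\rang\leq 0$ for all $w\in K$, that is, $z-x\in K^\circ$. This yields the three-condition characterization: $x=P_K(z)$ if and only if $x\in K$, $z-x\in K^\circ$, and $\lang z-x,x\rang=0$ (and likewise for $L$).

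With this in hand the two implications are short and symmetric. For (i)$\Rightarrow$(ii), given $z=x+y$ with $x\in K$, $y\in L=K^\circ$ and $\lang x,y\rang=0$, the three conditions for $P_K$ are verified by $x$, since $z-x=y\in K^\circ$ and $\lang y,x\rang=0$; hence $x=P_K(z)$, and the symmetric argument using $L^\circ=K$ gives $y=P_L(z)$. For (ii)$\Rightarrow$(i), I would apply the characterization to $x=P_K(z)$ to obtain $x\in K$, $z-x\in L$ and $\lang z-x,x\rang=0$; writing $y:=z-x$ this is precisely a decomposition of the form in (i), and by the uniqueness of the projection $y$ coincides with the given $P_L(z)$.

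The only genuinely delicate point is the passage between $K$ and $L$: the argument requires both $L=K^\circ$ and $K=L^\circ$, and the latter is where the bipolar theorem $K^{\circ\circ}=K$ (valid for closed convex cones) is used, or equivalently it is folded into the meaning of \emph{mutually polar}. I expect this reciprocity, rather than any analytic estimate, to be the main thing to get right; once it is secured, the derivation of the three-condition characterization and the bookkeeping in the two implications are routine.
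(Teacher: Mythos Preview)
The paper does not give its own proof of this statement: it is quoted as a classical theorem of Moreau \cite{Moreau1962} and used as a black box to derive~(\ref{mor}) and~(\ref{ppro}). Your argument is the standard one and is correct --- derive the three-condition characterization of $P_K$ on a cone (membership in $K$, residual in $K^\circ$, orthogonality of the two) from the variational inequality, and then read off both implications using $L=K^\circ$ and $K=L^\circ$. The one place worth tightening is the end of (ii)$\Rightarrow$(i): you overload the symbol $y$ by setting $y:=z-x$ while $y=P_L(z)$ is already given; it is cleaner to write $y':=z-x$, check that $y'$ satisfies the three conditions characterizing $P_L(z)$ (namely $y'\in L$, $z-y'=x\in K=L^\circ$, $\lang z-y',y'\rang=0$), and conclude $y'=y$ by uniqueness.
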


Let us denote by $P$ the projection onto $K$. Since $K$ and $K^\circ$ are
mutually polar (Farkas lemma), we have from Moreau's theorem the identity

\begin{equation}\label{mor}
x=Px+(I-P)x \quad \textrm{with} \quad \lang Px, (I-P)x \rang =0,
\end{equation} 
and the important consequence, that if $x=u+v$ with $u\in K$, $v\in K^\circ$ 
and $\lang u,v\rang =0$, then we must have $u=Px$
and $v=(I-P)x.$

From (\ref{mor}) $Px=0$ if and only if $x\in (I-P)H=K^\circ$ and thus

\begin{equation}\label{ppro}
P(I-P)=0,\;\; (I-P)P=0.
\end{equation}

{\bf Thus $P$ is a proper retraction and hence $P$ and $I-P$ are mutually
polar retractions with the cone ranges $K$ and respectively $K^\circ$.}

\end {example}

When $(H,\lang,\rang)=(\R^m,\lang.\rang)$, the $m$-dimensional Euclidean space,
the set
$$ K=\{t^1x_1+\dots+t^m x_m:\;t^i\in \R_+,\;i=1,\dots,m\}$$
with $x_1,\,\dots,\,x_m$ linearly independent vectors is called a \emph{simplicial cone}.
A simplicial cone is closed, pointed and generating.

The simplicial cones are related to vector lattices through
the following important result of Youdine (\cite{Youdine1939}).
 
\begin{theorem}[Youdine]\label{Yu}
The pair $(\R^m,K)$ is a vector lattice with continuous lattice operations 
if and only if $K$ is a simplicial cone. 
\end{theorem}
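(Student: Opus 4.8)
The plan is to prove the two implications separately, with the forward direction (simplicial $\Rightarrow$ lattice) being routine and the converse being the substantial one.

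For the easy direction, suppose $K=\cone\{x_1,\dots,x_m\}$ with $x_1,\dots,x_m$ linearly independent. Being $m$ independent vectors in $\R^m$, they form a basis, so every $x$ has a unique expansion $x=\sum_{i=1}^m\alpha_i(x)x_i$ with each $\alpha_i$ a continuous linear functional, and $x\in K$ iff $\alpha_i(x)\ge 0$ for all $i$. Thus the linear isomorphism $Tx=(\alpha_1(x),\dots,\alpha_m(x))$ carries $K$ onto the standard orthant $\R^m_+$ and $\leq_K$ onto the coordinatewise order. Since $(\R^m,\R^m_+)$ is a vector lattice with coordinatewise (hence continuous) lattice operations, transporting through $T^{-1}$ shows that $(\R^m,K)$ is a vector lattice with $x\vee y=\sum_{i=1}^m\max\{\alpha_i(x),\alpha_i(y)\}\,x_i$, and this operation is continuous because $T$, $T^{-1}$ and coordinatewise maximum are.

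For the converse, assume $(\R^m,K)$ is a vector lattice with continuous lattice operations. First I would record that $K$ is automatically closed: the positive part map $x\mapsto x^+$ is continuous and $K=\{x:x=x^+\}$ is the preimage of $0$ under the continuous map $x\mapsto x-x^+$. Closedness of $K$ in finite dimensions makes the order Archimedean (if $b-nx\in K$ for all $n$, then $\f1n b-x\to -x\in K$), while antisymmetry of $\leq_K$ gives that $K$ is pointed and the decomposition $x=x^+-x^-$ gives that $K$ is generating. The heart of the argument is then to show that a generating, pointed, closed cone carrying a lattice order must have exactly $m$ extreme rays spanned by linearly independent vectors. I would extract this from the real-valued lattice homomorphisms on $(\R^m,K)$, i.e. the positive functionals $\phi$ satisfying $\phi(x^+)=\phi(x)^+$, whose kernels are precisely the maximal lattice ideals. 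The key structural fact is that in an $m$-dimensional Archimedean Riesz space there are, up to scaling, exactly $m$ such homomorphisms $\phi_1,\dots,\phi_m$, they are linearly independent, and the map $\Phi=(\phi_1,\dots,\phi_m)$ is a linear lattice isomorphism of $(\R^m,K)$ onto $(\R^m,\R^m_+)$. Granting this, $K=\Phi^{-1}(\R^m_+)$ is the image of the orthant under the linear isomorphism $\Phi^{-1}$, hence $K=\cone\{\Phi^{-1}e_1,\dots,\Phi^{-1}e_m\}$ with $\Phi^{-1}e_1,\dots,\Phi^{-1}e_m$ linearly independent, i.e. $K$ is simplicial.

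The main obstacle is precisely establishing this count of lattice homomorphisms without simply quoting the classification of finite-dimensional Riesz spaces. A self-contained route I would pursue rephrases the lattice law geometrically: since the upper bounds of $\{0,v\}$ are $K\cap(v+K)$ and their least element is $v^+$, the lattice property is equivalent to $K\cap(v+K)=v^++K$ for every $v$, i.e. any two translates of $K$ meet in a third translate of $K$. I would use this intersection-stability, together with an induction on $\ddm$ that passes to the faces of $K$ (each face again carrying a lattice order in its span), to force $K$ to be polyhedral with exactly $m$ facets; dualizing, the $m$ facet-functionals are the sought lattice homomorphisms and their dual basis spans the $m$ extreme rays. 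Verifying that the induction step genuinely produces independent generators — rather than, say, a cone with more facets than its dimension such as the Lorentz cone, which fails the lattice law — is where the continuity and Archimedean hypotheses do the real work.
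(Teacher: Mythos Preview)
The paper does not prove Theorem~\ref{Yu} at all: it is stated as a classical result of Youdine with the citation \cite{Youdine1939} and is used only as background (and later invoked in Theorem~\ref{halo}). There is therefore no ``paper's own proof'' to compare your proposal against.

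On the substance of your proposal: the forward direction is correct and complete. For the converse, your reduction is the right one --- closedness of $K$ from continuity of $x\mapsto x^+$, hence Archimedean, pointed and generating, and then the decisive step of producing a linear lattice isomorphism $\Phi:(\R^m,K)\to(\R^m,\R^m_+)$ --- but you stop short of actually proving that decisive step. You yourself flag this: the existence of exactly $m$ independent real-valued lattice homomorphisms is the whole content of the theorem, and your two suggested routes (maximal lattice ideals; intersection-stability $K\cap(v+K)=v^++K$ plus induction on faces) are each only indicated, not carried out. In particular, the face-induction sketch needs a genuine argument that a proper face $F$ of $K$ is again a lattice cone in $\spa F$ and that the facet count comes out to exactly $m$; as written this is an outline, not a proof. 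So the proposal is a sound plan whose hard half remains to be executed, while the paper simply imports the result from the literature.
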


(For this reason in the vector lattice theory simplicial
cones sometimes are called Youdine cones as well.)

\begin{definition}\label{HVL}

The Hilbert space $H$ ordered by the order relation induced by the cone $K$
is called a \emph{Hilbert vector lattice} if (i) $K$ is a lattice cone, 
(ii) $\||x|\|=||x\|,\,\forall x\in H$, (iii) $0\leq x\leq y$ implies $\|x\|\leq\|y\|.$

\end{definition}

The cone $K$ is called \emph{self-dual}, if $K=-K^\circ.$ If $K$
is self-dual, then it is a generating, pointed, closed convex cone.

In all that follows we will suppose that $\R^m$ is endowed with a
Cartesian reference system with the standard unit vectors $e_1,\dots,e_m$.

The set
\[\R^m_+=\{x=(x^1,\dots,x^m)\in \R^m:\; x^i\geq 0,\;i=1,\dots,m\}\]
is called the \emph{nonnegative orthant} of the above introduced Cartesian
reference system. A direct verification shows that $\R^m_+$ is a
self-dual cone. 

%

Beside the terminology introduced in this section, we need some 
standard notions (hyperplane, core, relative
interior etc.) and some classical results on cones
and convex sets from functional analysis and convex geometry
contained in the monographs \cite{KreinRutman1948},\cite{Rockafellar1970}.

\section{Transversal cones; transversal mutually polar\\ retractions}

\begin{definition}\label{trans}
The pair of closed, pointed cones $M$ and $N$ in the Banach space $X$ 
will be  called \emph{transversal}, if the following conditions hold:

\begin{enumerate}
\item $M\cap N=\{0\}$;
\item There exist a straight line $\Delta$, called \emph{transversal line} trough $0$ in $X$ such that
\begin{enumerate}
\item$(M\setminus \{0\})\cap \Delta \not=\emptyset$ and
$(N\setminus \{0\})\cap \Delta \not=\emptyset$;
\item $(\intr M\cup \intr N)\cap \Delta \not= \emptyset$.
\end{enumerate}
\end{enumerate}
\end{definition}

\begin{definition}\label{mutu}
The mutually polar retractions $Q,\,R:X\to X$ are called \emph{transversal}
with the transversal line $\Delta$ through $\{0\}$, if for each two dimensional
plane $L$ containing $\Delta$ it holds $Q:L\to L$
and $R:L\to L$.
\end{definition}

\begin{proposition}\label{eelso}

If $Q,\,R:X\to X$  are mutually polar retractions and one of their
cone ranges has interior points, then the cone ranges are transversal.

\end{proposition}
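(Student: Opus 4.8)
The plan is to show that the cone ranges $M=\rng Q$ and $N=\rng R$ satisfy the two defining conditions of transversality in Definition~\ref{trans}. Suppose without loss of generality that $M$ has interior points. The first condition, $M\cap N=\{0\}$, follows quickly from the mutual polarity relations: if $x\in M\cap N$, then since $x\in M=\rng Q$ we have $Qx=x$, and since $x\in N=\rng R$ we have $Rx=x$; but $QR=0$ gives $Qx=QRx=0$, whence $x=0$. So the nontrivial task is to produce the transversal line $\Delta$ and verify condition~2.

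For the transversal line, I would exploit the identity $Q+R=I$ together with the interior point of $M$. Pick $w\in\intr M$ and consider the point $-w$. Applying the decomposition $I=Q+R$ to $-w$ gives $-w=Q(-w)+R(-w)$. By positive homogeneity $Q(-w)=-Q(w)\cdot(\text{not generally valid})$—here I must be careful, since retractions are only positive homogeneous, not linear, so I cannot simply distribute the sign. Instead the cleaner route is to let $\Delta$ be the line through $0$ spanned by a suitable vector. The natural candidate is a line meeting $\intr M$ on one side; the point $u=Q(-w)\in M$ and $v=R(-w)\in N$ decompose $-w$, and I would try to choose $\Delta$ to be the line determined by $w$ itself (or by $v$), arranging that it pierces both cones and meets the interior of $M$.

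More concretely, the key step is the following. Since $w\in\intr M$, the line $\Delta=\{tw:t\in\R\}$ already satisfies $(M\setminus\{0\})\cap\Delta\neq\emptyset$ (it contains $w$) and $(\intr M\cup\intr N)\cap\Delta\neq\emptyset$ (it meets $\intr M$ at $w$), so conditions 2(a)-first-half and 2(b) hold for this $\Delta$. What remains is to show $(N\setminus\{0\})\cap\Delta\neq\emptyset$, i.e.\ that some negative multiple $-sw$ ($s>0$) lies in $N$. To see this I would apply $R$ to $-w$: set $v=R(-w)\in N$. If $v\neq 0$ I must still connect it to the line $\Delta$; the honest approach is to use that $w\in\intr M$ means $Q$ fixes a full neighborhood direction, and that $R(-w)$ cannot be $0$, since $R(-w)=0$ would force $-w=Q(-w)\in M$, contradicting pointedness of $M$ (as $w\in M\setminus\{0\}$ and $M\cap(-M)=\{0\}$). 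Thus $R(-w)\neq 0$.

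The main obstacle, and the delicate point, is exactly the alignment: showing not merely that $N$ is nontrivially hit, but that a single line $\Delta$ through the origin simultaneously meets $M\setminus\{0\}$, meets $N\setminus\{0\}$, and meets $\intr M$. The cleanest way around this is to invoke transversality of the retractions on the two-dimensional plane through $\Delta$, or to use that $-w=Q(-w)+R(-w)$ exhibits $-w$ as a sum of a point of $M$ and a nonzero point $v\in N$; then one shows that along the ray from $w$ through $0$ one enters $N$, using that $Q$ maps $X\setminus M$ into $\bdr M$ (Definition~\ref{retract}(iv)) so that for small perturbations the $M$-component degenerates while the $N$-component survives. I would therefore spend the bulk of the argument establishing that the line through $w$ and $-v$ (equivalently, a line threading $\intr M$ and reaching into $N$) serves as $\Delta$, with the interior hypothesis on $M$ being precisely what guarantees condition 2(b) and prevents the two cones from meeting only tangentially along a common boundary direction.
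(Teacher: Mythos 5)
Your handling of condition 1 is correct (and is actually a point the paper's proof glosses over), and your starting move --- decomposing $-w$, $w\in\intr M$, via $Q+R=I$ --- is the same as the paper's. But there is a genuine gap at exactly the point you flag as ``the delicate alignment,'' and neither of your proposed repairs closes it. The line $\Delta=\{tw:t\in\R\}$ through an \emph{arbitrary} interior point $w$ simply does not meet $N\setminus\{0\}$ in general: take the configuration (ii) of Section 4, $M=\cone\{e_1,e_2\}$ and $N=\ray u$ a single ray, which by Theorem \ref{ketdim} does arise as the pair of cone ranges of mutually polar retractions; for any $w\in\intr M$ whose span is not the line $\spa\{u\}$, your $\Delta$ misses $N\setminus\{0\}$ entirely, and nothing in the hypotheses forces $-sw\in N$. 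Your fallback, ``the line through $w$ and $-v$,'' is not a line through the origin, so it cannot serve as a transversal line (Definition \ref{trans} requires $\Delta$ through $0$); and invoking transversality of the retractions on a two-dimensional plane is circular, since transversal retractions (Definition \ref{mutu}) presuppose the very transversal line you are trying to construct.

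The missing idea --- which is the whole content of the paper's proof --- is to span $\Delta$ by the $N$-component of the decomposition, not by $w$. With $z=-w\in-\intr M$, write $z=Qz+Rz=e+u$, where $e=Qz\in M$ and $u=Rz\in N$. Then
$$u=z-e\in(-\intr M)-M=-(\intr M+M)\subset-\intr M,$$
using the standard convexity fact $\intr M+M\subset\intr M$ for a convex cone $M$. Hence $u\in N$ and $-u\in\intr M$; in particular $u\neq 0$ (if $u=0$ then $0\in\intr M$, forcing $M=X$, which is excluded since the cone ranges must be pointed for the conclusion to make sense). So $\Delta=\spa\{u\}$ meets $N\setminus\{0\}$ at $u$, meets $M\setminus\{0\}$ at $-u$, and meets $\intr M$ at $-u$: all of condition 2 of Definition \ref{trans} holds at once. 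Note this also subsumes your separate argument that $R(-w)\neq 0$. In short, your proposal assembles the right ingredients but never produces a valid transversal line; the inclusion $\intr M+M\subset\intr M$ is what makes the alignment automatic, and without it the proof does not go through.
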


\begin{proof}

Suppose that $\intr \rng Q\not= \emptyset$. Take $z\in -\intr \rng Q$.
Then there exist $e\in \rng Q$ and $u\in \rng R$ such that $z=e+u.$
Then
$$u=z-e\in ( -\intr \rng Q - \rng Q)\cap \rng R \subset -\intr \rng Q\cap \rng R.$$

Hence $\Delta =\spa \{u\}$ is a transversal line of $\rng Q$ and $\rng R$.
\end{proof}

\begin{proposition}\label{ell}

Suppose that $Q, \,R:\,X\to X$ are mutually polar retractions
with $N=\rng R$. If $\dim N=1$, then they are transversal mutually
polar retractions with the
transversal line $\spa N$.
 
\end{proposition}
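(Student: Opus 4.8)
The plan is to verify Definition~\ref{mutu} directly, taking $\Delta=\spa N$ as the candidate transversal line. Since $\dim N=1$, the span $\spa N$ is a one-dimensional linear subspace, i.e.\ a genuine line through the origin, so it is an admissible choice for $\Delta$.

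First I would record the structural facts that drive the argument. Because $Q+R=I$, every $x\in X$ decomposes as $x=Qx+Rx$ with $Rx\in\rng R=N$; and the decisive observation is that $N\subseteq\spa N=\Delta$, so that the entire range of $R$ sits inside the transversal line. This containment is exactly what the one-dimensionality of $N$ provides, and it is the only place where the hypothesis $\dim N=1$ is used.

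Next I would take an arbitrary two-dimensional plane $L$ containing $\Delta$. Since $\Delta$ passes through $0$, the plane $L$ contains $0$ as well and is therefore a two-dimensional linear subspace. For $x\in L$ we then have $Rx\in N\subseteq\Delta\subseteq L$, which gives $R(L)\subseteq L$. For $Q$ I would use $Q=I-R$: for $x\in L$ both $x$ and $Rx$ lie in $L$, and since $L$ is a linear subspace their difference $Qx=x-Rx$ again lies in $L$, whence $Q(L)\subseteq L$. Both mapping conditions of Definition~\ref{mutu} are then met, so $Q$ and $R$ are transversal with transversal line $\spa N$.

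The main point to emphasize is that there is no serious obstacle once the right line is identified: the whole content reduces to $\rng R\subseteq\Delta$ together with the additivity $Q+R=I$. The only step needing care is the remark that $L$ is a \emph{linear} (not merely affine) subspace, which is what legitimizes passing from $Rx\in L$ to $Qx=x-Rx\in L$; containment of $\Delta$ is precisely what forces $0\in L$ and makes this valid. I would also note that the argument yields only Definition~\ref{mutu} and not the cone transversality of Definition~\ref{trans}, since the interior-point condition of the latter may well fail when $M$ has empty interior --- which is consistent with the proposition asserting only the weaker conclusion about the retractions.
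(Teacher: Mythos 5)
Your proof is correct and follows essentially the same route as the paper: both rest on the two facts that $\rng R=N\subseteq\spa N\subseteq L$ and that $Q=I-R$ then maps the linear subspace $L$ into itself. If anything, your version is slightly more complete, since the paper checks invariance only on elements of the form $x+y$ with $y\in\spa N$ (relying implicitly on homogeneity for the rest), while you handle an arbitrary point of an arbitrary plane containing $\Delta$ directly.
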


\begin{proof}

Let us take $x\in X \setminus(M\cup N)$. 
Denote $Y= \spa \{x,\, \spa N\}$. We should check that
$Q,\,R: Y\to Y$.

Take $x+y$ with some $y\in \spa N$. Obviously $R(x+y)\in N \subset Y$.
By definition $x+y =R(x+y) + Q(x+y).$ Hence, 
$$Q(x+y)=x+y-R(x+y) \in x+\spa N \subset Y.$$

\end{proof}

\section{Mutually polar retractions in $\R^2$}

Consider $\R^2$ endowed with a Cartesian reference system.

For a pair of transversal cones $K_1$ and $K_3$ in $\R^2$, the following two
geometrical configurations are possible:

\begin{enumerate}
	\item[(i)] 
	
Let $I,\, II,\, III,\, IV$ be the quadrants of this
reference system. Let $e_1,\, e_2,\,u_1,\,u_2\in \R^2$ such that
$e_1\in \intr I,\;e_2\in \intr II,\; u_1\in \intr III,\;u_2\in \intr IV.$
$$ K_1=\cone \{e_1,e_2\},\; \textrm{and}\;K_3=\cone \{u_1,u_2\}.$$

  \item[(ii)] 

With the above notations, let $e_1,\, e_2,\,u_1,\,u_2\in \R^2$ such that
$e_1\in \intr I,\;e_2\in \intr II,\; u=u_1=u_2\in III\cap IV\setminus \{0\}$.

$$ K_1=\cone \{e_1,e_2\},\; \textrm{and}\;K_3=\cone \{u\}.$$

\end{enumerate}

\begin{theorem}\label{ketdim}

If $M$ and $N$ are transversal closed pointed cones in $\R^2$, then they determine a
unique pair of mutually polar retractions $Q$ and $R$, such that the cone range of
$Q$ and $R$ are $M$ and $N$, respectively.

\end{theorem}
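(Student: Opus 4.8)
The plan is to use the dichotomy of configurations (i) and (ii) recorded just above the statement: since $M$ and $N$ are transversal, at least one of them is two-dimensional, so after possibly interchanging their roles I may assume $M$ is two-dimensional, and then $N$ is either two-dimensional (configuration (i)) or a single ray (configuration (ii)). I would fix the Cartesian system as in that description, so that the bounding rays of $M$ and $N$ occur in the cyclic (counterclockwise) order $\R_+e_1,\R_+e_2,\R_+u_1,\R_+u_2$ (with $u_1=u_2=u$ in case (ii)), where $M=\cone\{e_1,e_2\}$ and $N=\cone\{u_1,u_2\}$. These rays cut the plane into four closed sectors: $M$, $N$, and the two ``gaps'' $G_1=\cone\{e_2,u_1\}$ and $G_2=\cone\{u_2,e_1\}$ lying between them; in each gap the two bounding rays are linearly independent, as they lie in distinct quadrants.

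For existence I would define $Q$ sector by sector and set $R=I-Q$: put $Qx=x$ on $M$, $Qx=0$ on $N$, and on each gap use the representation furnished by linear independence, namely $Q(\alpha e_2+\beta u_1)=\alpha e_2$ on $G_1$ and $Q(\alpha e_1+\beta u_2)=\alpha e_1$ on $G_2$ (with $\alpha,\beta\ge 0$). Each piece is linear on a cone, so $Q$ is positively homogeneous, and the pieces agree on the shared bounding rays (e.g. on $G_1$, letting $\beta\to0$ gives $Qx=\alpha e_2$, matching the value $x$ inherited from $M$, and letting $\alpha\to0$ gives $Qx\to0$, matching the value on $N$), so $Q$ is continuous. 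Since $Qx\in M$ always and $Q$ fixes $M$ pointwise, $Q^2=Q$; moreover $\rng Q=M$ is a nonzero closed convex cone and $Qx\in\bdr M$ whenever $x\notin M$ (it lies on a bounding ray, or equals $0$ when $x\in N$), so $Q$ is a retraction. Applying the same checks to $R=I-Q$ yields a retraction onto $N$, while $Q+R=I$ holds by construction, and $Qx=0$ for $x\in N$ together with $Rx=0$ for $x\in M$ gives $QR=RQ=0$. In case (ii) the argument is identical, the only change being that $\bdr N=N$ (a ray has empty interior in $\R^2$), so the condition $Rx\in\bdr N$ is automatic.

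For uniqueness I would start from an arbitrary mutually polar pair $Q,R$ with $\rng Q=M$ and $\rng R=N$, and recover the formulas above. Idempotency forces $Qx=x$ for $x\in M$, and from $R=I-Q$ together with $Rx=x$ for $x\in N$ one gets $Qx=x-Rx=0$ for $x\in N$. The crux is to pin down $Q$ on the gaps. Fix the open gap $\intr G_1$: there $x\notin M$, so $Qx\in\bdr M=\R_+e_1\cup\R_+e_2$, and $x\notin N$, so $Qx\ne0$ (otherwise $x=Rx\in N$); thus $Qx$ lies in exactly one of the two open rays. As $x$ approaches the edge $\R_+e_2\subset M$, continuity gives $Qx\to x\in\R_+e_2$, and since $\intr G_1$ is connected and $Qx$ never meets $0$ there, $Qx$ remains on $\R_+e_2$ throughout $\intr G_1$. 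Hence $Qx=\lambda e_2$ and $Rx=x-\lambda e_2\in N$; writing $x=\alpha e_2+\beta u_1$ in its unique coordinates, linear independence of $e_2,u_1$ forces $\lambda=\alpha$, i.e. $Qx=\alpha e_2$. The symmetric argument on $\intr G_2$ forces $Qx=\alpha e_1$ there. Together with the values on $M$ and $N$, this determines $Q$, and then $R=I-Q$, uniquely.

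The step I expect to be the main obstacle is exactly this last uniqueness argument—ruling out that $Q$ might send part of a gap to the ``wrong'' bounding ray of $M$, or split its values between the two rays. The resolution is the connectedness-plus-continuity observation above: because $Qx=0$ only on $N$, the two candidate rays $\R_+e_1$ and $\R_+e_2$ are genuinely separated over each open gap (they meet solely at $0$, which $Q$ avoids there), so a continuous $\R_+e_1\cup\R_+e_2$-valued map cannot pass from one to the other without leaving the gap. I would take care to state this separation cleanly, since it is what makes the determination rigorous; the remaining verifications (homogeneity, idempotency, and the polarity identities) are routine.
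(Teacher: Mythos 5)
Your existence construction (the sector-by-sector definition of $Q$ with $R=I-Q$) and your connectedness argument pinning $Qx$ to a single bounding ray of $M$ over each open gap are essentially the paper's own proof. However, the last step of your uniqueness argument has a genuine gap in configuration (i). From $Qx=\lambda e_2$ and $Rx=x-\lambda e_2\in N$ you conclude that ``linear independence of $e_2,u_1$ forces $\lambda=\alpha$.'' That inference is invalid: $Rx$ is only known to lie in the two-dimensional cone $N=\cone\{u_1,u_2\}$ (or, using property (iv) of retractions, in $\bdr N=\R_+u_1\cup\R_+u_2$), not in $\spa u_1$, and linear independence of $e_2,u_1$ says nothing about a relation that also involves $u_2$. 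Concretely, take $e_1=(1,1)$, $e_2=(-1,1)$, $u_1=(-1,-1)$, $u_2=(1,-2)$ (a legitimate configuration of type (i)), and $x=e_2+u_1=(-2,0)\in\intr G_1$, so $\alpha=\beta=1$. Then also $x=4e_2+2u_2$. Hence the constraints you have actually established at this point --- $Qx\in\R_+e_2$, $Rx\in N$ (even $Rx\in\bdr N$), and $Qx+Rx=x$ --- are satisfied both by $(Qx,Rx)=(e_2,u_1)$ and by $(Qx,Rx)=(4e_2,2u_2)$, the latter with $\lambda=4\neq 1=\alpha$. So those constraints do not determine $Q$ on the gap, and the step as written fails.

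The missing ingredient --- and exactly what the paper supplies --- is to run the continuity/connectedness argument a second time, for $R$ on the same gap: for $x\in\intr G_1$ one has $Rx\in\bdr N=\R_+u_1\cup\R_+u_2$ and $Rx\neq 0$ (otherwise $x=Qx\in M$), and as $x$ approaches the edge $\R_+u_1$ of the gap, continuity forces $Rx$ onto $\R_+u_1$; connectedness then keeps $Rx$ on $\R_+u_1$ throughout $\intr G_1$. Only once \emph{both} $Qx\in\R_+e_2$ and $Rx\in\R_+u_1$ are known does $x=Qx+Rx$ together with linear independence of $e_2,u_1$ yield $Qx=\alpha e_2$ and $Rx=\beta u_1$; this is precisely the paper's ``by a similar argument, $Rx\in\ray u_2$'' step followed by the unique-representation argument. (In configuration (ii), where $N$ is a single ray, your shortcut is correct as written, since there $Rx\in\spa u_1$ automatically.) With this repair your proof coincides with the paper's; without it, the uniqueness claim is unproved.
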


\begin{proof}

(a) Using the notations introduced above consider the following cones:
\begin{enumerate}

\item $ K_1= \cone \{e_1,\,e_2\}$,

\item $ K_2 =\cone \{e_2,\,u_1\}$,

\item $ K_3= \cone \{u_1,\,u_2\}$,

\item $ K_4 =\cone \{u_2,\,e_1\}$.

\end{enumerate}

Observe that the transversal cones $M=K_1$ and $N=K_3$ admit such a 
representation (accepting the particular case $u_1=u_2$).

Then $\intr K_i\cap \intr K_j=\emptyset$ if $i\not=j$ and
$\R^2=K_1\cup K_2 \cup K_3 \cup K_4.$

Let $x\in \R^2$ be arbitrary. We define the following operators
$Q, \,R: \R^2 \to \R^2$:

\begin{displaymath}
Qx=\left\{\begin{array}{ll}
x&\textrm{if $x\in K_{1}$}\\
\lambda e_1&\textrm{if $x\in K_{4}$\,and $x=\lambda e_1+\mu u_2$}\\
\lambda e_2&\textrm{if $x\in K_{2}$\,and $x=\lambda e_2+\mu u_1$}\\
0&\textrm{if $x\in K_{3}$}
\end{array} \right.
\end{displaymath}

\begin{displaymath}
Rx=\left\{\begin{array}{ll}
x&\textrm{if $x\in K_{3}$}\\
\mu u_2&\textrm{if $x\in K_{4}$\,and $x=\lambda e_1+\mu u_2$}\\
\mu u_1&\textrm{if $x\in K_{2}$\,and $x=\lambda e_2+\mu u_1$}\\
0&\textrm{if $x\in K_{1}$}
\end{array} \right.
\end{displaymath}

A direct verification shows that $Q$ is a proper retraction with the cone range
$K_1$, $R$ is a proper retraction with cone range
$K_3$, that $Q+R=I$, and $QR=RQ=0$. Hence, $Q$ and $R$ are mutually polar retractions.

(b) Our constructions  of the mutually polar proper retractions $Q$ and $R$ holds
also when $u_1=u_2 \in K_3=\{(0,-t):\,t\in \R_+\}\setminus \{0\}.$
In booth cases $K_1$ and $K_3$ are transversal cones.

(c) Suppose that $Q$ and $R$ are mutually polar retractions in $\R^2$.
We will show that their cone ranges are transversal.

From the relation $Q+R=I$, it follows that
\begin{equation}\label{2trr} 
K_1+K_3= \R^2,\;\, \textrm{with}\; K_1=\rng Q,\;K_3=\rng R.
\end{equation}

 This relation shows that $K_1$ and
$K_3$ cannot be both of dimension one. Hence, we can assume for instance that
$\intr K_1 \not=\emptyset.$ Then, by Proposition \ref{eelso}, the
cones are transversal.

(d) For technical reasons, for $e\in \R^2 \setminus \{0\}$, we denote by
$$\ray e =\{te:\,t\in \R_+\}$$
the one dimensional cone engendered by $e$.

Suppose that $e_1,\, e_2$ and $u_1,\,u_2$ are vectors generating $K_1=\rng Q$
and $K_3=\rng R$, respectively. 

By an appropriate choice of the the reference system, we can realize to the 
transversal line $\Delta$ of $K_1$ and $K_3$ be the perpendicular axis of the reference system. 

 Hence we must have such a geometric picture as
in (a) (or (b)). 

(d) We will finally see that $Q$ and $R$ are exactly of the form as in (a) 
(or (b)).

Take an element in the interior of $K_4 =\cone \{e_1,u_2\}$ , say $x=e_1+u_2$.
We will see that $Qx=e_1$ and $Rx=u_2$.

Obviously $x=e_1+u_2= Q(e_1+u_2)+R(e_1+u_2)\in K_1+K_3$, since $Q+R=I$. 

By definition we must have
$Qx\in \bdr K_1=\ray e_1\cup \ray e_2$. $Qx\not= 0$, since $Qx=0$ would lead to the contradiction  $x\in K_3$.
 
If we had $z=Qx\in \ray e_2,$ then it would follow, by the continuity
of $Q$, that the line segment $xw$, joining $x$ and $w\in \ray e_1$, is
mapped by $Q$ in a continuous curve on $\bdr K_1$ joining $w$ and $z$. Such a 
curve must contain $0$, as
the image by $Q$ of an interior point $y$ of the segment $xw$. The equality 
$Qy=0$ would imply that an interior point $y$
of $K_4$ is in $K_3$, which is a contradiction.

Hence, we must have $Qx\in \ray e_1.$ 

By a similar argument, $Rx\in \ray u_2$. Since $x$ can uniquely be represented 
in the positive quadrant of the reference system endowed by the vectors $e_1$
and $u_2$, we must have $Qx=e_1,\;Rx=u_2$. Exploiting the positive
homogeneity of retractions, it follows for an arbitrary $y\in K_4$, $y=\lambda e_1+\mu u_2,\;
\lambda,\,\mu \in \R_+$ that $Qy=\lambda e_1,\;Ry=\mu u_2$. 
(If $\ray u_1=\ray u_2,$ then $\spa u_i\cap \intr \cone \{e_1,\,e_2\} \not= \emptyset$
and our proof is similar.)

 This proves that $Q$ and $R$
are exactly the retractions constructed in (a) or (b).

\end{proof}
 
\begin{corollary}\label{ketdimc}

Suppose that $\R^2$ is equiped with a norm. In 
what follows we will suppose that $\|e_1\|=\|e_2\|=\|u_1\|=\|u_2\|=1$. 

From the formulas defining the mappings $Q$ and $R$, it follows that they
are continuous functions of the vectors $e_1,\,e_2,\,u_1,\,u_2.$

\end{corollary}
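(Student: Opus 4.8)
The plan is to regard $Q$ and $R$ not merely as functions of $x$ but as functions $Q(x;v)$ and $R(x;v)$ of the pair $(x,v)$, where $v=(e_1,e_2,u_1,u_2)$ ranges over the open set $V$ of quadruples of unit vectors that generate a transversal configuration as in part (a) of Theorem \ref{ketdim} (including the degenerate case $u_1=u_2$ of part (b)). I would prove that the assignment $(x,v)\mapsto (Qx,Rx)$ is jointly continuous on $\R^2\times V$. Fixing $x$ then yields continuity of $Q$ and $R$ in the generating vectors, and, since $Q$ and $R$ are positively homogeneous, joint continuity on $\R^2\times V$ upgrades under perturbation of $v$ to uniform convergence on compact sets (equivalently, on the unit circle).

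First I would treat the interiors of the four sectors separately. On $\intr K_1$ and $\intr K_3$ the maps are $Qx=x,\;Rx=0$ and $Qx=0,\;Rx=x$, respectively, which do not depend on $v$ at all and are therefore continuous. On $\intr K_4$, writing $x=\lambda e_1+\mu u_2$ and solving the $2\times 2$ system by Cramer's rule gives
$$
\lambda=\f{\det[x,u_2]}{\det[e_1,u_2]},\qquad \mu=\f{\det[e_1,x]}{\det[e_1,u_2]},
$$
so that $Qx=\lambda e_1$ and $Rx=\mu u_2$ are rational functions of the entries of $x,e_1,u_2$. Transversality forces $e_1$ and $u_2$ to be linearly independent, so the denominator $\det[e_1,u_2]$ is continuous and nonvanishing throughout $V$; hence these expressions are continuous in $(x,v)$. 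The sector $K_2$ is handled identically with $e_2,u_1$ in place of $e_1,u_2$. This disposes of continuity on the relatively open pieces.

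The remaining and essential point is to glue these pieces across the moving boundaries. I would introduce the sets
$$
\tilde K_i=\{(x,v)\in \R^2\times V:\,x\in K_i\},\qquad i=1,2,3,4,
$$
each of which is closed in $\R^2\times V$ (membership of $x$ in the cone spanned by two continuously varying vectors is a closed condition) and whose union is all of $\R^2\times V$. On each $\tilde K_i$ the corresponding formula above is continuous. It then remains to check that the formulas of adjacent sectors agree on the overlaps, which are exactly the shared rays $\ray e_1=\tilde K_1\cap\tilde K_4$, $\ray e_2=\tilde K_1\cap\tilde K_2$, $\ray u_1=\tilde K_2\cap\tilde K_3$, $\ray u_2=\tilde K_3\cap\tilde K_4$; for instance on $\ray e_1$ one has $\mu=0$, so the $K_4$ formula yields $Qx=\lambda e_1=x$, coinciding with the $K_1$ formula (and $Rx=0$ in both), and similarly for the other three rays. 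With a finite closed cover whose continuous pieces agree on intersections, the gluing (pasting) lemma yields joint continuity of $(x,v)\mapsto(Qx,Rx)$.

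The main obstacle is precisely this last step: because the sectors $K_i$ themselves deform as $v$ varies, a fixed $x$ may pass from one sector into another under an arbitrarily small perturbation of $v$, so continuity of each individual formula is not by itself enough. What makes the argument go through is the agreement of the piecewise formulas on the shared rays, which is exactly what allows the closed cover $\{\tilde K_i\}$ of the product space $\R^2\times V$ to be assembled into a single continuous map. The degenerate case $u_1=u_2$ requires no separate treatment, since there the relevant generating pairs $(e_1,u)$ and $(e_2,u)$ remain linearly independent and the Cramer expressions stay valid.
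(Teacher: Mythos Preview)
Your argument is correct. The paper does not actually supply a proof of this corollary: it simply asserts that continuity in the generating vectors ``follows from the formulas'' for $Q$ and $R$ displayed in the proof of Theorem~\ref{ketdim}. Your write-up is precisely the detailed justification the paper omits: making the piecewise formulas explicit via Cramer's rule, lifting the four sectors to closed pieces $\tilde K_i\subset\R^2\times V$, checking agreement on the shared boundary rays (and at the origin), and invoking the pasting lemma to obtain joint continuity. The only point worth adding for completeness is that the non-adjacent overlaps $\tilde K_1\cap\tilde K_3$ and $\tilde K_2\cap\tilde K_4$ also need to be checked; the first is $\{0\}$ by transversality, and the second is either $\{0\}$ or, in the degenerate case $u_1=u_2=u$, the ray $\ray u$, on which both the $K_2$- and $K_4$-formulas give $Qx=0$, $Rx=x$. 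With that remark your closed-cover argument is complete.
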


\section{Mutually polar retractions on pair of transversal cones}

\begin{proposition}\label{ktrr}
Suppose that $Q, \,R:\,X\to X$ are mutually polar retractions
with $M=\rng Q$, $N=\rng R$.  If $\dim N=1$, and $\intr M\not= \emptyset$,
then they are transversal cones with $\Delta = \spa N$ their transversal line.
\end{proposition}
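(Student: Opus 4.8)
The plan is to verify the two defining conditions of transversality (Definition \ref{trans}) directly, reusing the structural facts already established. First I would observe that since $M$ and $N$ are the cone ranges of mutually polar retractions, condition (i) of Definition \ref{trans}, namely $M\cap N=\{0\}$, is immediate from $QR=RQ=0$ together with $Q+R=I$: if $z\in M\cap N$, then $z=Qz$ (as $z\in M=\rng Q$ and $Q$ is idempotent) and simultaneously $Rz=z$, whence $0=QRz=Qz=z$. So the whole content lies in exhibiting a transversal line $\Delta$ satisfying condition (2) of the definition.

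Here the key point is that we already have a candidate line, $\Delta=\spa N$, forced on us by the hypothesis $\dim N=1$, and we have essentially the right tool in Proposition \ref{eelso}. Indeed, the proof of Proposition \ref{eelso} shows, under the assumption $\intr M=\intr\rng Q\neq\emptyset$, that one can pick $z\in-\intr M$ and decompose $z=e+u$ with $e\in M$, $u\in N$, obtaining a point $u\in(-\intr M)\cap N$; the line $\spa\{u\}$ is then declared transversal. So I would run exactly that argument to produce such a $u\in(-\intr M)\cap(N\setminus\{0\})$. The extra step needed here, which is where the one-dimensionality of $N$ enters decisively, is to identify this $u$'s line with $\spa N$: since $\dim N=1$ and $u$ is a nonzero element of $N$, we automatically have $\spa\{u\}=\spa N$, so the transversal line produced by Proposition \ref{eelso} is precisely $\Delta=\spa N$, as claimed.

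With $\Delta=\spa N$ fixed, I would then check the two sub-conditions of (2) explicitly. For (2a), we need $(M\setminus\{0\})\cap\Delta\neq\emptyset$ and $(N\setminus\{0\})\cap\Delta\neq\emptyset$. The second is trivial, since $N\setminus\{0\}\subset\spa N=\Delta$. For the first, note that the point $u$ constructed above lies in $(-\intr M)\cap\Delta$; its negative $-u$ then lies in $\intr M\subset M$, and $-u\in\spa N=\Delta$ since $\Delta$ is a line through the origin, giving $-u\in(M\setminus\{0\})\cap\Delta$. For (2b) we need $(\intr M\cup\intr N)\cap\Delta\neq\emptyset$; this is witnessed by the same point $-u\in\intr M\cap\Delta$ (we do not even need $\intr N$, which is empty when $\dim N=1$ unless $X$ is itself one-dimensional).

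The main obstacle, and the only place requiring care, is the decomposition step borrowed from Proposition \ref{eelso}: one must confirm that an interior point of $-M$ really decomposes as $e+u$ with $e\in M$ and $u\in N$, and that the resulting $u$ is nonzero and lands in the interior translate so that $u\in-\intr M$. This rests on the surjectivity forced by $Q+R=I$ — every $z\in X$ writes as $Qz+Rz$ with $Qz\in M$, $Rz\in N$ — combined with the elementary cone fact $-\intr M-M\subset-\intr M$, exactly as in the cited proof. Once $u\neq0$ is secured (which follows because $z\in-\intr M$ cannot lie in $M$, so its $M$-part cannot absorb all of it), the identification $\spa\{u\}=\spa N$ and the verification of (2a)--(2b) are routine, and the proposition follows.
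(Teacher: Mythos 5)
Your proof is correct, but it takes a genuinely different route from the paper's. The paper does not argue via Proposition \ref{eelso} at all: it first invokes Proposition \ref{ell} to conclude that $Q$ and $R$ are \emph{transversal retractions} in the sense of Definition \ref{mutu} (every two-dimensional plane $Y$ through $\Delta=\spa N$ is invariant under $Q$ and $R$), then applies the two-dimensional result (Theorem \ref{ketdim}) inside each such plane to conclude that $\Delta$ meets the relative interior of $M_Y=M\cap Y$, and finally assembles these plane-by-plane facts into a single statement $y\in\core M=\intr M$ by a convex-geometric (core equals interior) argument. You bypass all of this machinery: you rerun the decomposition argument of Proposition \ref{eelso} --- take $z\in-\intr M$, write $z=Qz+Rz=e+u$, and use $-\intr M-M\subset-\intr M$ --- to produce $u\in(-\intr M)\cap(N\setminus\{0\})$, and then the single observation that $\dim N=1$ forces $\spa\{u\}=\spa N$ identifies the transversal line as $\Delta=\spa N$; checking conditions (1), (2a), (2b) of Definition \ref{trans} with the points $u$ and $-u$ is then immediate. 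Your route is shorter and more elementary: it uses only the hypothesis $\intr M\neq\emptyset$ (exactly what Proposition \ref{eelso} needs) plus the dimension count, avoiding both the two-dimensional classification theorem and the core-versus-interior identification; you are also more careful than the paper on the small points ($M\cap N=\{0\}$, and $u\neq 0$), which the paper leaves implicit. What the paper's heavier route exhibits in addition is the plane-invariance structure coming from Proposition \ref{ell}, which is what subsequent results (e.g.\ Theorem \ref{foo} and Corollary \ref{foo1}) actually exploit; but for the statement of the proposition itself your argument fully suffices.
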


\begin{proof}
We have from Proposition \ref{ell} that $\Delta$ is the transversal
line of the retractions $Q$ and $R$.
From the definition of transversal mutually polar
retractions and Theorem \ref{ketdim}, for each two-dimensional
plane $Y$ through $\Delta$ we must have that $M_Y=Y\cap M$ and $N_Y=Y\cup N$
are mutually polar two dimensional cones in $Y$. Since $\dim N_Y=1$, the line
$\spa N_Y$ (which in fact is exactly $\Delta$) must meet $M_Y$ in its
relative interior point $y$. From convex
geometric reasons, this means that $y\in \core M= \intr M $ \cite{Rockafellar1970}.
This proves that $M$ and $N$ are transversal cones with $\spa N$ their
transversal line.
\end{proof}

\begin{theorem}\label{foo}
Consider the transversal cones $M$ and $N$ in the Banach space $X$ with the transversal line $\Delta$.
If one of the following conditions hold
\begin{enumerate}
\item [(i)] $\intr M\not=\emptyset$, $\intr N\not= \emptyset$, or
\item [(ii)] $\dim N=1$,
\end{enumerate}
then there exists a unique pair of transversal mutually polar retractions $Q$ and $R$ with
transversal line $\Delta$ and the cone ranges $M$ and $N$, respectively.
\end{theorem}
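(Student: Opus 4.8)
The plan is to reduce the infinite-dimensional construction to the two-dimensional case already settled in Theorem \ref{ketdim}, using the transversality hypothesis to slice $X$ into two-dimensional planes through $\Delta$ on which $Q$ and $R$ are forced to act. Concretely, I would fix the transversal line $\Delta$ and, for each point $x\in X$, form the plane $Y_x=\spa\{x,\Delta\}$ (when $x\notin\Delta$; on $\Delta$ itself the action is determined since one ray lies in $M$ and one in $N$). On $Y_x$ the cones $M_x=M\cap Y_x$ and $N_x=N\cap Y_x$ are closed pointed cones, and I would first verify that they are \emph{transversal in $Y_x$}, i.e. that $\Delta$ serves as a transversal line for the pair $(M_x,N_x)$ inside the plane. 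This is where hypotheses (i) and (ii) enter: under (ii), $\spa N=\Delta$ and the argument of Proposition \ref{ktrr} shows $\Delta$ meets $M$ in a relative-interior point, so inside $Y_x$ the restricted cone $M_x$ has nonempty relative interior meeting $\Delta$; under (i), both interiors are nonempty so the restricted pair inherits the interior-point condition.

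Once each slice $(M_x,N_x)$ is a transversal pair in the plane $Y_x$, Theorem \ref{ketdim} produces a \emph{unique} pair of mutually polar retractions $Q_x,R_x:Y_x\to Y_x$ with ranges $M_x$ and $N_x$. I would then \emph{define} $Qx:=Q_x(x)$ and $Rx:=R_x(x)$, and the bulk of the work is checking that this globally defined pair satisfies all the axioms. The decomposition identity $Q_x+R_x=I$ on $Y_x$ immediately gives $Q+R=I$ globally, and $Q_xR_x=R_xQ_x=0$ on each plane gives $QR=RQ=0$ (noting that $Rx\in N_x\subset Y_x$, so $Q(Rx)=Q_x(Rx)=0$). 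Positive homogeneity is clear because $Y_{tx}=Y_x$ for $t>0$ and $Q_x$ is positively homogeneous; idempotence follows since $Qx\in M_x\subset Y_x$ forces $Q(Qx)=Q_x(Q_xx)=Q_xx=Qx$. The range and boundary conditions (iii)–(iv) of Definition \ref{retract} I would read off from the corresponding properties of the planar retractions together with the fact that $\rng Q=\bigcup_x M_x=M$.

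The main obstacle I expect is \textbf{well-definedness and continuity of the global maps}. The formula $Qx=Q_x(x)$ is only unambiguous once one checks that the value does not depend on any auxiliary choices in forming $Y_x$ and, more seriously, that the assignment $x\mapsto Qx$ is \emph{continuous} as a map $X\to X$—a requirement in Definition \ref{retract}(i) that is automatic in dimension two but genuinely needs the planes $Y_x$ to vary continuously with $x$. Here I would lean on Corollary \ref{ketdimc}: the planar retractions depend continuously on the generating vectors of $M_x$ and $N_x$, and as $x$ varies these generators (the intersections of $\bdr M$ and $\bdr N$ with $Y_x$, suitably normalized) vary continuously, so $Q$ and $R$ are continuous away from degenerate directions; the degenerate set can be handled by the positive-homogeneity and the interior-point hypotheses. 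Transversality of the resulting retractions is then immediate from the construction, since by design $Q$ and $R$ map each two-dimensional plane containing $\Delta$ into itself. Finally, \emph{uniqueness} follows by reversing the argument: any mutually polar pair with ranges $M,N$ and transversal line $\Delta$ restricts, by Definition \ref{mutu}, to mutually polar retractions on each $Y_x$ with ranges $M_x,N_x$, which by the uniqueness clause of Theorem \ref{ketdim} must coincide with $Q_x,R_x$, hence with our $Q,R$.
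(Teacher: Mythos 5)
Your proposal follows essentially the same route as the paper's own proof: slicing $X$ into two-dimensional planes through $\Delta$, restricting $M$ and $N$ to each plane, invoking Theorem \ref{ketdim} for existence and uniqueness of the planar retractions, using Corollary \ref{ketdimc} for continuity, and deducing global uniqueness from the planar uniqueness. If anything, you spell out more carefully than the paper does the verification of transversality of the restricted cones, well-definedness, idempotence, and the restriction argument for uniqueness.
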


\begin{proof}

(i) Suppose that $M$ and $N$ is a pair of transversal cones in $X$
with the transversal line $\Delta$.

We will construct a pair of mutually polar proper retractions $Q$ and $R$
with the cone range $M$ and $N$ respectively.

Take $x\in X\setminus (M\cup N)$ Let $H$ be the two dimensional plane
$$H= \spa\{x,\,\Delta\}.$$ 

The plane $H$ intersects the cones $M$ and $N$ in the pair of transversal cones
$M_H=M\cap H$ and $N_H=N\cap H$.

If we identify $H$ with $\R^2$ and take an appropriate reference system therein, 
then we can  suppose that

$$M_H=\cone\{e_1,\,e_2\},\;\; N_H= \cone \{u_1,\,u_2\},$$

with appropriate unit vectors $e_1$, $e_2$, $u_1$ and $u_2$ in $H$. Next we can construct the
mutually polar proper retractions $Q_H$ and $R_H$ in $H$ as it was done in
the proof of Theorem \ref{ketdim}.

Since $\intr M\not= \emptyset$ and $\intr N\not= \emptyset,$ from convex 
geometric reasons, the vectors $e_i,\, u_j, \|e_i\|=\|u_j\|=1,\,i,j=1,2$ depend continuously
on $x$ and the same thing is valid for $Q_H$ and $R_H$.

Define now the mappings

$$Qx=Q_Hx\; \textrm{if}\; x\in H,\;\; Rx=R_Hx ,\;\textrm{if}\;x\in H.$$

The pair of mappings $Q$ and $R$ is obviously a pair of
mutually polar proper retractions with the cone ranges
$M$ and $N$, respectively.

From Theorem \ref{ketdim} it follows that $Q_H$ and $R_H$ are well
determined by their cone ranges and hence so are $Q$ and $R$.

(ii)
Our constructions  of the mutually polar retractions $Q$ and $R$ hold
also when $u_1=u_2 \in K_3=\{(0,-t):\,t\in \R_+\}\setminus \{0\}.$

\end{proof}

\begin{corollary}\label{foo1}
If $Q,,\;R: X\to X$ are mutually polar retractions in the Banach
space $X$ with the cone ranges $M$ and respectively $N$ with
$\intr M \not=\emptyset$ and $\dim N=1$, then $Q$ and $R$ are well
defined by their cone ranges.
\end{corollary}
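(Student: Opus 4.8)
The plan is to reduce Corollary \ref{foo1} to the uniqueness already established in Theorem \ref{foo}. The corollary asserts that a pair of mutually polar retractions $Q,R$ on $X$ with $\intr M\neq\emptyset$ and $\dim N=1$ is completely determined by the cone ranges $M=\rng Q$ and $N=\rng R$. The strategy is: first invoke the structural results proved earlier to show that $M$ and $N$ are automatically transversal with transversal line $\Delta=\spa N$; then observe that the hypotheses of Theorem \ref{foo}(ii) are met, so Theorem \ref{foo} guarantees a \emph{unique} transversal mutually polar pair with these cone ranges; finally argue that \emph{any} mutually polar pair $Q,R$ with these ranges is necessarily transversal, hence must coincide with the one produced by Theorem \ref{foo}.

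Concretely, I would proceed as follows. First, since $\intr M\neq\emptyset$ and $\dim N=1$, Proposition \ref{ktrr} applies directly and tells us that $M$ and $N$ are transversal cones with $\Delta=\spa N$ as their transversal line. Second, by Proposition \ref{ell}, because $\dim N=1$, the pair $Q,R$ is automatically a pair of \emph{transversal} mutually polar retractions with transversal line $\spa N$; thus the given pair is of exactly the type controlled by Theorem \ref{foo}. Third, the transversal cones $M,N$ satisfy condition (ii) of Theorem \ref{foo} (namely $\dim N=1$), so that theorem yields a unique pair of transversal mutually polar retractions with transversal line $\Delta$ and cone ranges $M$ and $N$. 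Since our given $Q,R$ is such a pair, uniqueness forces it to agree with the canonical one, proving that $Q,R$ are well defined by their cone ranges.

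The essential content, and the only place where anything could go wrong, is the second step: verifying that the abstract hypothesis "$Q,R$ are mutually polar with $\dim N=1$" genuinely forces transversality in the sense of Definition \ref{mutu}. This is precisely the statement of Proposition \ref{ell}, whose short proof shows that for $x+y$ with $y\in\spa N$ one has $R(x+y)\in N\subset Y$ and hence $Q(x+y)=x+y-R(x+y)\in x+\spa N\subset Y$, so both operators preserve each plane $Y=\spa\{x,\spa N\}$ through $\Delta$. Once this invariance is in hand, the uniqueness is inherited plane-by-plane from the two-dimensional uniqueness in Theorem \ref{ketdim}, which is what Theorem \ref{foo} packages. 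I expect the main subtlety to be bookkeeping: ensuring that the transversal line arising from Proposition \ref{ktrr} is the \emph{same} $\Delta=\spa N$ used in the transversality of Proposition \ref{ell} and in the construction of Theorem \ref{foo}, so that all three results refer to a single geometric configuration and the uniqueness clause of Theorem \ref{foo} can be applied without ambiguity.
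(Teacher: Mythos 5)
Your proposal is correct and takes essentially the same route as the paper's own proof: invoke Proposition \ref{ell} to conclude that $Q$ and $R$ are transversal mutually polar retractions with transversal line $\spa N$, then apply the uniqueness clause of Theorem \ref{foo}. Your additional explicit appeal to Proposition \ref{ktrr}, verifying that the cone ranges $M$ and $N$ themselves form a transversal pair (the standing hypothesis of Theorem \ref{foo}), is a detail the paper leaves implicit, so your write-up is if anything slightly more complete than the original two-line argument.
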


\begin{proof}

By Proposition \ref{ell} $Q$ and $R$ are transversal retractions,
and hence, from Theorem \ref{foo}, it follows that they are well defined by
their cone ranges.

\end{proof}


\section{Mutually polar retractions with order theoretic properties}

We will say that a retraction $S$ is subadditive or isotone, if
it has this property with respect to the order relation defined
by its cone range $\rng S$. Thus, by saying that the mutually polar
retractions $Q$ and $R$ are subadditive or isotone, we mean that $Q$ has the 
corresponding property with respect the order relation endowed by $\rng Q$ and 
$R$ has the corresponding property with respect to the order relation endowed by $\rng R$.

\subsection{Subadditive one range retractions}

For the sake of completeness of the proofs, in this subsection we 
will reproduce some results and proofs from Section 5 and Section 6  of
\cite{NemethNemeth2020}.

In \cite{NemethNemeth2020} a retraction $R:\,X\to X$ was called \emph{one range},
if $N=\rng R=R(X)$ is  one dimensional. 

\begin{lemma}\label{oner}

If $R:\, X\to X$ is a one range retraction, then it is of form
$$Rx=q(x)u,\; \textrm{where}\; u\in N=\rng R,\;\textrm{and}\; q\; \textrm{is positive homogeneous functional with}\;q(u)=1.$$

\end{lemma}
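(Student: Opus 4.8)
The plan is to exploit the one-dimensionality of the range to peel off a scalar coefficient. First I would observe that since $N=\rng R$ is a one-dimensional closed convex cone, it spans a one-dimensional subspace, so that $\spa N=\spa\{u\}$ for a suitable nonzero $u\in N$ (a generator of the ray when $N$ is pointed). Because $Rx\in N\subseteq\spa\{u\}$ for every $x\in X$ and $u\neq 0$, each image $Rx$ is a unique scalar multiple of $u$; I then define $q:X\to\R$ by letting $q(x)$ be precisely that unique scalar, so that $Rx=q(x)u$ holds by construction. The well-definedness of $q(x)$ is immediate from $u\neq 0$.

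Next I would verify the two asserted properties of $q$. Positive homogeneity of $q$ follows at once from that of $R$: for $t\in\R_+$ we have $q(tx)u=R(tx)=tRx=tq(x)u$, whence $q(tx)=tq(x)$ since $u\neq 0$. For the normalization $q(u)=1$ I would invoke idempotency. As $u\in N=\rng R$, there is some $v$ with $u=Rv$, and then $Ru=R(Rv)=R^2v=Rv=u$; consequently $q(u)u=Ru=u$, which forces $q(u)=1$ because $u\neq 0$.

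There is no serious obstacle here: the entire argument rests on the elementary fact that a nonzero vector determines its scalar coefficient uniquely. The only two points that require any care are the well-definedness of $q$ (secured by $u\neq 0$) and the identity $Ru=u$, that is, the fact that a retraction restricts to the identity on its own range, which is exactly what idempotency delivers once $u$ is known to lie in $\rng R$. I note that continuity of $R$ is not needed for this representation itself, though it would become relevant for deducing further regularity of the functional $q$.
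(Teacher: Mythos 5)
Your proof is correct and follows essentially the same route as the paper: the representation $Rx=q(x)u$ is forced by the one-dimensionality of $\rng R$, and idempotency yields the normalization $q(u)=1$. The only (minor) difference is in the last step: you get $Ru=u$ directly from $u=Rv$ and $R^2=R$, whereas the paper computes $q(x)u=R(Rx)=R(q(x)u)=q(x)q(u)u$ and cancels $q(x)$; your variant is marginally cleaner, since it avoids invoking positive homogeneity there and does not implicitly require a point $x$ with $q(x)\neq 0$.
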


\begin{proof}
The representation of $Rx$ in the form $q(x)u$, with some 
$u\in N\setminus \{0\}$ and $q$ a non-negative positive homogeneous 
functional is obvious.

By
definition, $R$ is idempotent, hence we must have
$$ q(x)u = Rx = R(Rx) = R(q(x)u)=q(x)Ru=q(x)q(u)u, $$
whereby $q(u)=1$.
\end{proof}

\begin{lemma}\label{onecomp}
For the one range retraction $T$ , with $Tx=q(x)u$, there exists the
idempotent mapping $S:\,X\to X$ such that $T+S=I$ and
$TS=ST=0$ if and only if
\begin{equation}\label{onec}
q(x-q(x)u)=0,\;\forall \;x\in X.
\end{equation}
\end{lemma}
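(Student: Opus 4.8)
The plan is to observe first that the requirement $T+S=I$ leaves no freedom in the choice of $S$: it forces $S=I-T$, so that $Sx=x-q(x)u$ with $q$ and $u$ as furnished by Lemma \ref{oner} (in particular $q$ is non-negative, positive homogeneous and $q(u)=1$). Thus the whole statement reduces to deciding \emph{when this particular $S$} is idempotent and satisfies $TS=ST=0$. The point to keep in mind throughout is that $T$ is only positive homogeneous, not additive; hence one cannot simplify composites by the linear identity $T(I-T)=T-T^2$, and every composition must be evaluated by substituting the explicit formula $Tx=q(x)u$.

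For the necessity direction I would assume such an $S$ exists and read off (\ref{onec}) from $TS=0$. Evaluating directly,
$$
(TS)x=T\big(x-q(x)u\big)=q\big(x-q(x)u\big)\,u,
$$
and since $u\neq 0$ the vanishing of $TS$ is precisely the scalar identity $q(x-q(x)u)=0$ for all $x$, i.e. (\ref{onec}).

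For the sufficiency direction I would assume (\ref{onec}), set $S=I-T$, and verify the three demands separately. The relation $ST=0$ holds unconditionally: using $q(x)\in\R_+$, the positive homogeneity of $q$, and $q(u)=1$,
$$
(ST)x=q(x)u-T\big(q(x)u\big)=q(x)u-q(x)q(u)u=0 .
$$
The relation $TS=0$ is then just the computation of the previous paragraph read backwards, now guaranteed by (\ref{onec}). Finally, idempotency follows from the same hypothesis, since
$$
S^2x=Sx-q\big(x-q(x)u\big)u=Sx
$$
by (\ref{onec}), where one again uses that $T$ acts on $Sx=x-q(x)u$ only through the scalar $q(x-q(x)u)$.

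The only genuinely delicate point, and the one I would flag as the main obstacle, is the asymmetry hidden by the nonlinearity of $T$: although $TS$ and $ST$ look symmetric, $ST=0$ turns out to be automatic (it needs only $q(u)=1$ together with homogeneity), whereas both $TS=0$ and the idempotency of $S$ genuinely require (\ref{onec}). Tracking this asymmetry correctly, rather than appealing to any linear cancellation, is what the argument really rests on.
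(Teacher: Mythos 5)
Your proof is correct and follows essentially the same route as the paper: force $S=I-T$ from $T+S=I$ and verify everything by direct substitution of $Tx=q(x)u$, being careful that only right-distributivity is available for the nonlinear $T$. The sole (harmless) variation is in the necessity direction, where you read (\ref{onec}) off the identity $TS=0$, while the paper extracts it from the idempotency of $S$ via $S(Sx)=Sx-q(x-q(x)u)u$; both yield the same conclusion, and your sufficiency computations (idempotency of $S$ and $TS=0$ from (\ref{onec}), $ST=0$ automatic from $q(u)=1$ and homogeneity) mirror the paper's.
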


\begin{proof}
Suppose that $S$ is an idempotent mapping with the property in the lemma.
Then $S=I-T$ and
\begin{equation}\label{onec1}
  Sx=S(Sx)=(I-T)((I-T)x)=(I-T)(x-Tx)=(I-T)(x-q(x)u)=x-q(x)u-T(x-q(x)u)=
(I-T)x-q(x-q(x)u)u=Sx-q(x-q(x)u)u,
\end{equation}
whereby we have relation (\ref{onec}).

Conversely, if (\ref{onec}) holds, then from (\ref{onec1}),  $S$ is idempotent.

Further,
$$ TSx=T(I-T)x=T(x-q(x)u)=q(x-q(x)u)u,$$
and by (\ref{onec}) it follows that $TS=0.$

From the idempotent property of $T$ it follows that
$$ST=(I-T)T=T-T^2=T-T=0.$$
\end{proof} 

\begin{remark}\label{megj}
\begin{enumerate}
\item We observe that for three non-linear mappings $E,\;F$ and $G$ we
have the distributivity "`from right"', that is, by definition
$$(E-F)G=EG-FG,$$
but in general
the "`distributivity from left"' does not hold, that is, in general
$$G(E-F)\not=GE-GF.$$
\item If in Lemma \ref{onecomp} the range $SX$ of $S$ would be
a closed convex cone, then we would be able to assert that $T$ and
$S$ are mutually polar retractions.
\end{enumerate}
\end{remark}

\begin{definition}\label{assnor}\cite{Cobzas2013}
The functional $q:\,X\to \R_+$ is said an \emph{asymmetric norm}
if the following conditions hold:
\begin{enumerate} 
\item $q$ is positive homogeneous, i.e., $q(tx)=tq(x),\, \forall t\in \R_+,\,\,\forall x\in X$;
\item $q$ is subadditive, i.e., $q(x+y)\leq q(x)+q(y),\,\,\forall x,\,y \in X$;
\item If $q(x)=q(-x)=0$ then $x=0$. 
\end{enumerate}
\end{definition}

\begin{example}\label{gauge}
If $C\subset X$ is a closed convex set with $0\in \intr C$,
then the functional $q: \,X\to \R_+$ is called the
\emph{gauge of $C$} if

$$q(x)=\inf\{t\in \R, t>0:\,x\in tC\}.$$

The gauge is an  asymmetric norm (see e. g. \cite{Cobzas2013}, p. 165).
\end{example}

\begin{proposition} \label{??}
If $q:\,X\to \R_+$ is an asymmetric norm
satisfying the relation (\ref{onec}) with some $u$ , $q(u)=1$, then 
$Rx=q(x)u$ and $Q=I-R$ are mutually polar retractions.
\end{proposition}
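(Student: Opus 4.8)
The plan is to reduce the statement to Remark~\ref{megj}(2) and then to verify the single remaining point, namely that the range of $Q$ is a closed convex cone. By Lemma~\ref{oner} and Lemma~\ref{onecomp}, the hypotheses that $q$ is positive homogeneous with $q(u)=1$ and satisfies relation~(\ref{onec}) already guarantee that $Rx=q(x)u$ is idempotent and positive homogeneous, that $Q=I-R$ is idempotent, and that $QR=RQ=0$. Thus conditions (ii) and (iii) of Definition~\ref{muture} hold, and it only remains to confirm that $Q$ and $R$ meet all four requirements of Definition~\ref{retract}.

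First I would treat $R$. Since $q(tu)=tq(u)=t$ for $t\in\R_+$, the functional $q$ is onto $\R_+$, so $\rng R=\{q(x)u:x\in X\}=\ray u$, a nonzero one-dimensional closed convex cone; condition (iv) for $R$ is then automatic, because $\ray u$ has empty interior in $X$ and hence equals its own boundary. The substantive work is with $Q$. I would first identify $\rng Q=\ker q:=\{x\in X:q(x)=0\}$: indeed $q(Qx)=q(x-q(x)u)=0$ by~(\ref{onec}), so $\rng Q\subseteq\ker q$, while $Qv=v-q(v)u=v$ for every $v\in\ker q$ gives the reverse inclusion. That $\ker q$ is a cone is immediate from positive homogeneity; since it is a cone, convexity reduces to closure under addition, and $q(v)=q(w)=0$ with subadditivity give $0\le q(v+w)\le q(v)+q(w)=0$, which is exactly where the asymmetric-norm hypothesis (beyond mere homogeneity) is used. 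Finally $\ker q$ is nonzero because $x=Qx+q(x)u$ shows $X=\ker q+\R u$, so $\ker q$ has codimension at most one.

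The hard part is closedness of $\rng Q$ together with the boundary axiom (iv) for $Q$, and both hinge on continuity of $q$. Being nonnegative, positive homogeneous and subadditive, $q$ is sublinear and hence convex; in finite dimensions this already yields continuity (and then $\ker q=\{q\le 0\}$ is closed), while in a general Banach space I would secure continuity separately, e.g. through the gauge representation of Example~\ref{gauge}. For (iv), given $x\notin\ker q$ I would set $t^{*}=\inf\{t\ge 0:x-tu\in\ker q\}$; continuity and closedness give $x-t^{*}u\in\ker q$ with $x-tu\notin\ker q$ for $t<t^{*}$, so $x-t^{*}u\in\bdr\ker q$, and subadditivity together with~(\ref{onec}) pins down $t^{*}=q(x)$ via $q(x)\le q(x-t^{*}u)+t^{*}q(u)=t^{*}\le q(x)$, whence $Qx=x-q(x)u=x-t^{*}u\in\bdr\rng Q$. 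With all conditions of Definition~\ref{retract} verified for both maps, Remark~\ref{megj}(2) lets me conclude that $Q$ and $R$ are mutually polar retractions. I expect the main obstacle to be exactly this interplay: convexity of $\rng Q$ is the clean payoff of subadditivity, but closedness and the boundary condition require the continuity of $q$, which is free only in finite dimensions and must otherwise be imposed or derived.
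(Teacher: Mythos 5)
Your proposal is correct and follows essentially the same route as the paper: reduce, via Lemma \ref{onecomp} and Remark \ref{megj}(2), to proving that $\rng Q=\ker q$ is a closed convex cone, with the cone property coming from positive homogeneity and convexity from subadditivity of $q$ --- exactly the paper's argument. You go further than the paper on two points, and both additions are substantive. First, you actually verify axiom (iv) of Definition \ref{retract} for $Q$ and $R$, which the paper leaves buried in the unproved Remark \ref{megj}(2); your argument is sound, and can even be shortened: subadditivity gives $q(x-tu)\geq q(x)-t>0$ for $t<q(x)$, while $q(x-q(x)u)=0$ by (\ref{onec}), so $Qx$ is a limit of points outside $\ker q$ and lies on $\bdr\ker q$. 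Second, you flag that closedness of $\ker q$ hinges on continuity of $q$: the paper simply asserts ``it is closed since $R$ is continuous,'' yet Definition \ref{assnor} nowhere requires an asymmetric norm to be continuous, and sublinearity yields continuity for free only in finite dimensions. In an infinite-dimensional space one can manufacture a discontinuous asymmetric norm satisfying (\ref{onec}) --- take a discontinuous linear functional $h$, set $g(y)=\|y\|+h(y)^+$ and run the construction of Proposition \ref{cons} --- and then $R$ fails axiom (i) of Definition \ref{retract}, so continuity is a genuinely tacit hypothesis of the proposition rather than a pedantic quibble. The one weak spot in your write-up is the suggestion that continuity could be ``secured through the gauge representation'': it cannot be derived from the stated hypotheses and must simply be assumed; with that assumption in place, your proof (and the paper's) is complete.
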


\begin{proof}
By Lemma \ref{onecomp} and item 2 of Remark \ref{megj} we only have to see
that $\rng Q=Q(X)$ is a closed convex cone.

From $R+Q=I$, it follows that $\rng Q=\ker R =\{x\in X:\,Rx=0\}.$ Hence, we have to show
that $\ker R$ is a closed convex cone. It is closed since $R$ is continuous.
Since $\ker R=\ker q$, we must see that if $q(x)=0,$ then $q(tx)=0$ for $t\in \R_+$
, and that $q(x)=q(y)=0$ implies $q(x+y)=0.$ The first relation follows
from the positive homogeneity of the asymmetric norm. From the
subadditivity of the asymmetric norm we have $0\leq q(x+y)\leq q(x)+q(y)=0.$
This shows that $\rng Q=\ker q$ is a convex cone.

\end{proof}

\begin{proposition}[Proposition 4 \cite{NemethNemeth2020}]\label{cons}

Consider the space $\R\times X$ and denote by $(t,x)$ the sum $t+x,\;t\in \R,\;x\in X$.

Suppose that $g: X\to \R_+$ is an asymmetric norm.

Define
\begin{equation*}
q:\R\times X\to \R_+ \;\;\textrm{by}\;\;q(t,x)=(t+g(x))^+.
\end{equation*}
Then $q$ is an asymmetric norm satisfying the relation
\begin{equation}\label{cons2}
q((t,x)-q(t,x)(1,0))=0,\;\;\forall \;(t,x)\in \R\times X, 
\end{equation}
and hence
\begin{equation}\label{cons3}
Q:\R\times X \to \R\times X,\;\; Q((t,x))=q((t,x))(1,0)
\end{equation}
is a range one subadditive proper retraction.
\end{proposition}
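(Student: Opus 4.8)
The plan is to verify directly that $q$ meets the three axioms of an asymmetric norm, then to check the algebraic identity (\ref{cons2}), and finally to invoke Proposition \ref{??} to upgrade $q$ to a proper retraction, reading off the range-one and subadditivity claims almost for free.

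First I would establish that $q(t,x)=(t+g(x))^+$ is an asymmetric norm. Positive homogeneity is immediate: for $s\in\R_+$ we have $q(s(t,x))=(st+g(sx))^+=(s(t+g(x)))^+=s(t+g(x))^+$, using positive homogeneity of $g$ and of the positive-part operation. Subadditivity follows by chaining three facts: the subadditivity of $g$, the monotonicity of $a\mapsto a^+$, and its subadditivity; concretely $t_1+t_2+g(x_1+x_2)\le (t_1+g(x_1))+(t_2+g(x_2))$, and applying $(\cdot)^+$ gives $q((t_1,x_1)+(t_2,x_2))\le q(t_1,x_1)+q(t_2,x_2)$. For definiteness, $q(t,x)=q(-(t,x))=0$ forces $t+g(x)\le 0$ and $-t+g(-x)\le 0$; adding these and using $g\ge 0$ yields $g(x)=g(-x)=0$, so $x=0$ by the definiteness of $g$, and then $t\le 0$ and $-t\le 0$, i.e. $t=0$.

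Next I would check relation (\ref{cons2}). Writing $s=q(t,x)$, we have $(t,x)-s(1,0)=(t-s,x)$, so $q((t,x)-q(t,x)(1,0))=(t-s+g(x))^+$. A short case split settles it: if $t+g(x)\ge 0$ then $s=t+g(x)$ and $t-s+g(x)=0$; if $t+g(x)<0$ then $s=0$ and $t-s+g(x)=t+g(x)<0$. In either case the positive part is $0$.

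Finally, noting $q(1,0)=(1+g(0))^+=1$, the pair consisting of $q$ and $u=(1,0)$ satisfies exactly the hypotheses of Proposition \ref{??} (identity (\ref{onec}) is here (\ref{cons2})), so $Q((t,x))=q(t,x)(1,0)$ and $I-Q$ are mutually polar retractions; in particular $Q$ is a proper retraction whose range is the ray $\{s(1,0):s\in\R_+\}$, which is one dimensional, so $Q$ is range one. Subadditivity is then purely a matter of unwinding the order on this ray: since $a(1,0)\le b(1,0)$ in the order of $\rng Q$ holds iff $a\le b$, the inequality $Q((t_1,x_1)+(t_2,x_2))\le Q(t_1,x_1)+Q(t_2,x_2)$ is equivalent to $q(t_1+t_2,x_1+x_2)\le q(t_1,x_1)+q(t_2,x_2)$, already proved. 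The only places requiring care are the case analysis in (\ref{cons2}) and correctly identifying the order on the one-dimensional range; everything else is a direct transfer of the asymmetric-norm axioms for $g$ through the positive-part map.
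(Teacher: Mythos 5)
Your proof is correct and follows essentially the same route as the paper: verify the asymmetric-norm axioms for $q$, check the identity (\ref{cons2}), and invoke Proposition \ref{??} with $u=(1,0)$ to conclude that $Q$ is a proper range-one retraction. Your elementary steps are in places tighter than the paper's --- you get subadditivity of $q$ by chaining monotonicity and subadditivity of the scalar positive-part map instead of the paper's sign case analysis, you verify (\ref{cons2}) by a direct case split on the sign of $t+g(x)$ rather than the paper's homogeneity reduction to the case $q((t,x))=1$ (which leaves the trivial case $q((t,x))=0$ unmentioned), and you make explicit the subadditivity of $Q$ via the order on the ray $\R_+(1,0)$, a point the paper leaves implicit.
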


\begin{proof}

The functional $q$ is obviously positively homogeneous.
To prove its subadditivity, we must verify the relation
\begin{equation}\label{cons4}
q((t_1,x_1)+(t_2,x_2))=(t_1+t_2+g(x_1+x_2))^+\leq (t_1+g(x_1))^+ + (t_2+g(x_2))^+= q((t_1,x_1))+ q((t_2,x_2)).
\end{equation}
If all the involved sums in the round brackets  are non-negative, the relation (\ref{cons4}) follows from the subadditivity of $g$.

Suppose that $t_1+g(x_1) \leq 0$. Then
$$t_1+t_2 +g(x_1+x_2) \leq -g(x_1) +t_2 +g(x_1+x_2)\leq -g(x_1) +t_2 +g(x_1) +g(x_2)= t_2 +g(x_2) $$
and the relation (\ref{cons4}) follows independently from the signs of the involved terms.

Suppose $q((t,x))=q(-(t,x))=0$, that is, $t+g(x)=-t +g(-x)=0.$ 
Then $g(x)+g(-x)=0$ and since both the terms are non-negative, they must be zero.
We have first that $x=0$ and then that $t=0$.

Thus $q$ is an asymmetric norm. To verify relation (\ref{onec}), it is sufficient to
consider that $q((t,x))=(t+g(x))^+=1.$ Then we get
$$q((t,x)-(1,0))=0,\;\;\textrm{with} \; q((t,x))=1.$$
But $q((t,x)-(1,0))=q((t-1,x))=(t-1+g(x))^+=0$, and the relation (\ref{onec}) follows.

According to Proposition \ref{??}, the operator $Q$ defined at (\ref{cons3}) is
a proper range one retraction.

\end{proof}

\begin{proposition}\label{cons5}
Let $Q$ be the retraction constructed in Proposition \ref{cons}. Then, by
denoting $K=\R_+(1,0)$, we have for $K^Q=(I-Q)X$ that
$$K^Q=\{(t,x):\,t+g(x)\leq 0\}$$ and
$$(I-Q)((t,x))= (t,x)-q((t,x))(1,0)$$
 is a subadditive proper retraction with
the cone range $K^Q.$
\end{proposition}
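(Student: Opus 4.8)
The plan is to establish the claim in three parts, each following directly from the machinery already assembled. First I would identify the range $K^Q=(I-Q)X$ explicitly. Since $Q+R=I$ with $R=I-Q$, we have $(I-Q)X=\ker Q$ exactly as in the proof of Proposition \ref{??}, so $K^Q=\{(t,x):q((t,x))=0\}=\{(t,x):(t+g(x))^+=0\}$. Unwinding the definition of the positive part, $(t+g(x))^+=0$ holds precisely when $t+g(x)\leq 0$, which gives the asserted description $K^Q=\{(t,x):\,t+g(x)\leq 0\}$.

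Next I would confirm that $I-Q$ is a proper retraction with this cone range. Proposition \ref{cons} already certifies that $Q$ is a proper range-one retraction, and by Remark \ref{proper} the pair $Q$, $I-Q$ is then a pair of mutually polar retractions; hence $I-Q$ is automatically a retraction whose range is the closed convex cone $K^Q$. The explicit formula $(I-Q)((t,x))=(t,x)-q((t,x))(1,0)$ is just the definition of $I-Q$ together with the form of $Q$ from (\ref{cons3}). One could also verify directly from the description above that $K^Q$ is a convex cone: positive homogeneity of $q$ gives closure under nonnegative scaling, and subadditivity gives $t_1+t_2+g(x_1+x_2)\leq (t_1+g(x_1))+(t_2+g(x_2))\leq 0$, closure under addition; closedness follows from continuity of $g$ (equivalently of $q$).

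The substantive point, and the step I expect to carry the real content, is subadditivity of $I-Q$ with respect to the order relation $\leq_{K^Q}$. Here I would unwind what subadditivity means: I must show $(I-Q)((t_1,x_1)+(t_2,x_2))\leq_{K^Q}(I-Q)((t_1,x_1))+(I-Q)((t_2,x_2))$, i.e. that the difference
$$
\bigl[(I-Q)((t_1,x_1))+(I-Q)((t_2,x_2))\bigr]-(I-Q)((t_1,x_1)+(t_2,x_2))
$$
lies in $K^Q$. Using the linearity of $I$ and the formula for $Q$, this difference collapses to $\bigl(q((t_1,x_1)+(t_2,x_2))-q((t_1,x_1))-q((t_2,x_2))\bigr)(1,0)$, a nonnegative multiple $-c$ of $(1,0)$ where $c\geq 0$ is exactly the subadditivity defect of $q$ established in (\ref{cons4}). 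So I need $-c(1,0)\in K^Q$, that is $-c+g(0)=-c\leq 0$, which holds since $c\geq 0$. Thus the subadditivity of the asymmetric norm $q$ proved in Proposition \ref{cons} transfers directly into order-subadditivity of $I-Q$.

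The main obstacle is purely notational: one must be careful that "subadditive" for the retraction $I-Q$ is measured in the order $\leq_{K^Q}$ induced by its \emph{own} range, not in the order of $Q$, and that the sign conventions in $K^Q=\{t+g(x)\leq 0\}$ match the direction in which the subadditivity defect points. Once the defect $c\geq 0$ is correctly seen to produce an element $-c(1,0)$ of $K^Q$ (rather than of its negative), the argument closes. No genuinely new estimate beyond (\ref{cons4}) is required; everything else is bookkeeping built on Propositions \ref{??} and \ref{cons}.
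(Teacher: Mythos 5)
Your proof is correct and takes essentially the same route as the paper: you identify $K^Q=\ker Q=\{(t,x):\,t+g(x)\leq 0\}$ via the mutual polarity of $Q$ and $I-Q$, and you establish subadditivity by showing that $(I-Q)((t_1,x_1))+(I-Q)((t_2,x_2))-(I-Q)((t_1,x_1)+(t_2,x_2))$ equals $-c(1,0)$ with $c\geq 0$ the subadditivity defect of $q$, which lies in $K^Q$ --- exactly the paper's observation that this difference equals $-Qx-Qy+Q(x+y)\in -K\subset K^Q$. The only difference is cosmetic: you write the defect as an explicit scalar multiple of $(1,0)$ rather than in the paper's operator form.
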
 

\begin{proof}

Since $Q$ and $I-Q$ are mutually polar retractions, it follows that
$K^Q=\rng(I-Q)=\ker Q=\{(t,x): t+g(x)\leq 0\}.$

Observe that $-K\subset K^Q$ since $-1+g(0)=-1\leq 0,$ that is,
$(-1,0)\in K^Q.$  We have
$$(I-Q)x+(I-Q)y-(I-Q)(x+y)= -Qx-Qy +Q(x+y) \in -K \subset K^Q,$$
which shows the subadditivity of $I-Q$.

\end{proof}

\begin{definition}
The subset $B \subset K$ is called \emph{the basis of the cone} $K$ if for each
$x\in K\setminus \{0\}$ there exists a unique $\lambda >0$ such that
$\lambda x\in B$.
\end{definition}

\begin{lemma}\label{basis}
If the Banach space $X$ is separable, $Q,\,R:X\to X$ are mutually polar
retractions with $M=\rng Q$, $N=\rng R$, and $\dim N=1$, then $M=\rng Q$
poses a basis $B$ on a hyperplane
$H_0$  with $ \spa N \cap \relintr B \not= \emptyset$.
\end{lemma}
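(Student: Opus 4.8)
The plan is to exhibit an explicit affine hyperplane cutting $M$ in a basis, and then to locate $-u$ in the relative interior of that section, where $u$ generates the ray $N$. The separability hypothesis will be used at exactly one point, to manufacture a strictly positive functional on $M$, and I expect that to be the crux.

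First I would put the data in normal form. Since $\dim N=1$ and $N=\rng R$, write $N=\ray u$; by Lemma \ref{oner}, $Rx=q(x)u$ with $q\colon X\to\R_+$ continuous, positively homogeneous, $q(u)=1$. From $Q+R=I$ one reads off $M=\rng Q=\ker R=\ker q$, and from $QR=0$ one gets $M\cap N=\{0\}$. By Proposition \ref{ell}, $Q$ and $R$ are transversal with line $\Delta=\spa N=\R u$. A one-line computation gives $Q(-u)=-(1+q(-u))u\in M$, so $\ray(-u)\subseteq M$; in particular $-u\in M$ and $M\cap\Delta=\ray(-u)$. Next I would fix an arbitrary two-dimensional plane $L\supseteq\Delta$: by transversality $Q|_L,R|_L$ are mutually polar retractions of $L\cong\R^2$, so by Theorem \ref{ketdim} their ranges $M_L=M\cap L$ and $N_L=N$ are transversal cones of $\R^2$; as $\dim N_L=1$ this is configuration (ii), with $u$ on the negative transversal axis and $M_L=\cone\{e_1,e_2\}$, $e_1,e_2$ strictly inside the two upper quadrants. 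Hence the positive transversal direction $-u$ lies in $\intr_L M_L$. Taking $L=\spa\{u,v\}$ for each $v\in X$ (the case $v\in\R u$ being immediate) shows $-u+tv\in M$ for all small $|t|$, i.e. $-u\in\core M$. The same slice picture shows each $M_L$ is pointed, whence $M$ is pointed (a line in $M$ would either produce a line in some $M_L$ or force a nonzero point of $M\cap N$).

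The heart of the proof, and the place where separability enters, is to produce $f\in X^*$ with $f(x)>0$ for every $x\in M\setminus\{0\}$. I would argue as follows. Choose a countable dense set $\{x_k\}$ in $M\cap S$, where $S$ is the unit sphere. For each $k$, since $M$ is pointed and closed, $-x_k\notin M$ and $d_k:=d(-x_k,M)>0$; strict separation of $-x_k$ from the cone $M$ yields $f_k\in X^*$, $\|f_k\|=1$, with $f_k\ge0$ on $M$ and $f_k(x_k)\ge d_k$. Put $f=\sum_k 2^{-k}f_k\in X^*$, so $f\ge0$ on $M$. If some $x\in M\cap S$ had $f(x)=0$, then $f_k(x)=0$ for all $k$; choosing $x_k\to x$ and using $d_k\to d(-x,M)>0$ with $\|f_k\|=1$ gives $f_k(x)\ge f_k(x_k)-\|x-x_k\|>0$ for large $k$, a contradiction. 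Hence $f>0$ on $M\setminus\{0\}$. I expect this separation-and-summation step to be the main obstacle: the naive choice $f_k(x_k)>0$ fails because the values may vanish in the limit, and it is precisely the quantitative margin $f_k(x_k)\ge d_k$, together with $d_k\to d(-x,M)>0$, that defeats this. This is a Krein--Rutman-type existence of a strictly positive functional \cite{KreinRutman1948}.

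Finally I would assemble the basis. Rescale so that $f(-u)=1$ (legitimate since $-u\in M\setminus\{0\}$), and set $H_0=\{x:f(x)=1\}$ and $B=M\cap H_0$. For $x\in M\setminus\{0\}$ the unique scalar $\lambda=1/f(x)>0$ satisfies $\lambda x\in B$, so $B$ is a basis of $M$ lying on the hyperplane $H_0$. Since $f(u)=-f(-u)=-1$, the line $\spa N=\R u$ meets $H_0$ exactly at $-u$. It then remains to check $-u\in\relintr B$: for any direction $v\in\ker f$ tangent to $H_0$, the property $-u\in\core M$ gives $-u+tv\in M$ for small $|t|$, and $f(-u+tv)=1$ keeps these points in $B$, so $-u$ is an (algebraic) relative interior point of $B$. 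Hence $-u\in\spa N\cap\relintr B$, so $\spa N\cap\relintr B\ne\emptyset$, completing the proof.
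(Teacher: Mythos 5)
Your proof is correct and, in outline, it is the same proof as the paper's: produce a continuous linear functional $f$ strictly positive on $M\setminus\{0\}$, take $B=M\cap H_0$ where $H_0$ is the affine hyperplane on which $f$ equals $f(-u)$, and use the fact that $-u$ is an interior-type point of $M$ to get $-u\in\spa N\cap\relintr B$. The difference lies in what is proved versus what is cited, and your version is self-contained on exactly the two points the paper delegates to references. First, the paper simply cites \cite{KreinRutman1948} for a hyperplane $H$ through $0$ with $(M\setminus\{0\})\cap H=\emptyset$; you prove this ingredient, and your execution is sound: the quantitative separation $f_k(x_k)\ge d_k=d(-x_k,M)$ (obtained by separating the ball of radius $d_k$ about $-x_k$ from $M$), the normalization $\|f_k\|=1$, and the continuity of $x\mapsto d(-x,M)$ are precisely what make $f=\sum_k 2^{-k}f_k$ strictly positive on $M\setminus\{0\}$ rather than merely nonnegative. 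Second, the paper places $-u$ in $\intr M$ by citing Proposition \ref{ktrr}, whose hypotheses include $\intr M\neq\emptyset$ (tacitly invoked when the paper's proof calls $M$ a proper cone, although the lemma's statement does not assume it); you instead extract $-u\in\core M$ from the two-dimensional slices through $\Delta$ via Theorem \ref{ketdim}, which presupposes nothing about the interior, and since $M$ is closed and convex in a Banach space, $\core M=\intr M$ by Baire category, so you recover, rather than assume, the interiority the paper needs. One caveat applies equally to your argument and to the paper's: applying Theorem \ref{ketdim} to the restrictions $Q|_L,\,R|_L$ requires knowing that these restrictions satisfy axiom (iv) of Definition \ref{retract} relative to $L$, a point neither argument verifies; this is therefore not a gap of your proposal relative to the paper's own proof.
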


\begin{proof}
Since $M$ is a proper cone and $X$ is separable, there exists a hyperplane $H$ through
$0$ such that $(M\setminus \{0\})\cap H=\emptyset$ \cite{KreinRutman1948}. From geometric reasons,
$H$ can be taken such that $H\cap N=\{0\}.$  Since by Proposition \ref{ell} and Proposition \ref{ktrr}
$\spa N\cap \intr M \not=\emptyset$, by taking $u\in (N \setminus \{0\})$, it
follows that $-u \in \intr M$. Then $B= M\cap H_0$ with $H_0=H-u$
will be a basis of $M$ and $-u= \spa N \cap \relintr B \not= \emptyset$.

\end{proof}

The main result of this subsection is the

\begin{theorem}\label{subad} 
Suppose that $X$ is a separable Banach space.
\begin{enumerate}
\item
If $Q,\,R:\,X\to X$ are mutually polar retractions with $\intr\rng M\not= \emptyset$ and
$\dim \rng R=1$,
 then they are well defined by their cone ranges and 
are subadditive.
\item
If $M$ and $N$ are closed transversal cones in $X$ and $\dim N=1$, then there exist
a unique pair of mutually polar retractions $Q$ and $R$ with $\rng Q=M$,
$\rng R=N$. $Q$ and $R$ are subadditive.
\end{enumerate}
\end{theorem}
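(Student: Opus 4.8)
The plan is to reduce both items to one normal form and then read off subadditivity from Propositions \ref{cons} and \ref{cons5}. I first dispose of the existence and uniqueness assertions. In item 1 the claim that $Q,R$ are determined by their cone ranges is exactly Corollary \ref{foo1}. In item 2, since $\dim N=1$ the ray $N$ has empty interior in $X$, so the interior requirement 2(b) of Definition \ref{trans} can only be met through $\intr M$; hence $\intr M\neq\emptyset$, and Theorem \ref{foo}(ii) furnishes a unique transversal mutually polar pair with $\rng Q=M$ and $\rng R=N$. Both items therefore reduce to proving subadditivity under the standing hypotheses $\dim N=1$ and $\intr M\neq\emptyset$.

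I would then put the pair into normal form. By Lemma \ref{oner}, $Rx=q(x)u$ with $u$ spanning $N$, $q(u)=1$ and $q\geq 0$ positively homogeneous, so $Qx=x-q(x)u$ and $M=\ker R=\{q=0\}$. Separability enters through Lemma \ref{basis}, whose proof supplies a linear hyperplane $H$ with $H\cap M=\{0\}$, $H\cap N=\{0\}$ and $-u\in\intr M$. Since $u\notin H$ we obtain $X=\R u\oplus H$ and write $x=tu+w$ uniquely, with $t\in\R$, $w\in H$. For fixed $w$ the set $\{t:tu+w\in M\}$ is closed and convex, contains all sufficiently negative $t$ (because $-Su+w=S(-u+w/S)\in\intr M$ for large $S$), and is bounded above (else $u\in M$, contradicting $M\cap N=\{0\}$); so it equals $(-\infty,t_0(w)]$ with $t_0(w)$ finite, and I set $g(w):=-t_0(w)$.

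The crux is to match the data $(q,g)$ with that of Proposition \ref{cons}. The functional $g$ is positively homogeneous, and it is subadditive because $M$ is additively closed: from $t_iu+w_i\in M$ one gets $(t_1+t_2)u+(w_1+w_2)\in M$, so $t_0(w_1)+t_0(w_2)\leq t_0(w_1+w_2)$; moreover $g\geq 0$, with $g(w)=0$ only for $w=0$ because $H\cap M=\{0\}$, so $g$ is an asymmetric norm on $H$. For the value of $q$, axiom (iv) of Definition \ref{retract} forces $Qx=(t-q(x))u+w\in\bdr M$ whenever $x\notin M$; since the unique boundary point of $M$ on the line $\{su+w\}$ is at $s=t_0(w)$, this yields $q(tu+w)=(t+g(w))^+$. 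Under the linear identification $\R u\oplus H\cong\R\times H$, $tu+w\mapsto(t,w)$, this is exactly the construction of Proposition \ref{cons} (with $u\leftrightarrow(1,0)$ and $M\leftrightarrow K^Q=\{t+g(w)\leq 0\}$), our $R$ being its $Q$ and our $Q$ its $I-Q$. Hence $q$ is subadditive, and $Rx+Ry-R(x+y)=(q(x)+q(y)-q(x+y))u\in\R_+u=N$ gives subadditivity of $R$ (Proposition \ref{cons}), while $Qx+Qy-Q(x+y)=(q(x+y)-q(x)-q(y))u\in\R_+(-u)\subseteq M$ gives subadditivity of $Q$ (Proposition \ref{cons5}).

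The main obstacle is precisely this passage to normal form: showing that an arbitrary mutually polar pair with one-dimensional $N$ becomes, after the splitting $X=\R u\oplus H$, the canonical pair of Proposition \ref{cons}. The delicate points are the finiteness of $g$ (resting on $u\notin M$, from $M\cap N=\{0\}$, and on the interior point $-u$ furnished by Lemma \ref{basis}) and the identity $q(tu+w)=(t+g(w))^+$ (where retraction axiom (iv) is indispensable); the subadditivity of $g$ then comes for free from convexity of $M$. Once this representation is secured, both subadditivity conclusions follow merely from the sign of the defect $q(x+y)-q(x)-q(y)$ together with $N=\R_+u$ and $-u\in M$.
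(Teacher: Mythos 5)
Your proof is correct, and while it shares the paper's skeleton---Lemma \ref{basis} supplying the hyperplane $H$ and the interior point $-u$, the splitting $X=\spa N\oplus H$, a functional $g$ on $H$, and the normal form $q(tu+w)=(t+g(w))^+$ of Propositions \ref{cons} and \ref{cons5}---it identifies the given pair with that normal form by a genuinely different mechanism. The paper works in the opposite direction: it defines $g$ as the gauge of the translated basis $D=B+u$ (an asymmetric norm by Example \ref{gauge}), builds from it the canonical pair $Tx=(t+g(y))^+u$, $S=I-T$, proves $\rng S=M$ using the basis property of $B$, and only then concludes $Q=S$, $R=T$ by the uniqueness statement of Corollary \ref{foo1}, so that subadditivity of the given pair is inherited from the constructed one. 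You instead analyze the given pair directly: Lemma \ref{oner} gives $Rx=q(x)u$, you define $g$ intrinsically from $M$ by $g(w)=-\sup\{t:\,tu+w\in M\}$, and you force the identity $q=(t+g)^+$ out of retraction axiom (iv) of Definition \ref{retract}, an axiom the paper's proof of this theorem never invokes directly (it enters only deep inside the two-dimensional uniqueness argument behind Corollary \ref{foo1}). What your route buys: the subadditivity conclusion becomes independent of the uniqueness machinery of Theorems \ref{ketdim} and \ref{foo}---indeed your argument re-proves uniqueness as a by-product, since any mutually polar pair with these cone ranges is shown to satisfy $q=(t+g)^+$---and it makes explicit exactly where axiom (iv) and separability are needed. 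What the paper's route buys: it avoids the pointwise boundary analysis and gets the asymmetric-norm properties of $g$ for free from the theory of gauges, at the price of resting everything on Corollary \ref{foo1}. One small point you should make explicit: the non-negativity of $g$ (i.e., $tu+w\notin M$ for $t>0$) needs the one-line argument that $tu+w\in M$ together with $-u\in M$ would give $w=(tu+w)+t(-u)\in M\cap H=\{0\}$, whence $tu\in (M\cap N)\setminus\{0\}$, contradicting $M\cap N=\{0\}$.
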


\begin{proof}

(a) Using the notations in Lemma\ref{basis},
we will identify $H$ with a Banach space.
Translate the basis $B$ along $\spa N$  to the
origin. Denote the translated set by $D$, and let $g$ be
its gauge in $H$.

(b) Let us consider the space $X$ represented as
$$X=\spa N+H.$$
Then $x=tu+y$ with $y\in H$, 
$u\in N\setminus \{0\}$, $t \in \R.$ The element
$x=tu+y$ can be denoted by $x=(t,y)$.

Define $T:X\to X$ as follows:
$$ \textrm{If} \;x=tu+y=(t,y) \;\textrm{put}\; Tx=(t+g(y))^+ u. $$

(c) Repeating step by step the proof in Proposition \ref{cons} 
and Proposition \ref{cons5}, we conclude
that $T$ is a regular range one subadditive retraction on $N$. Thus
$S=I-T$ is a retraction too, with its cone range $\{(t,y):t+g(y)\leq 0\}$.

We will show that
\begin{equation}\label{gauge}
M=\{(t,x): t+g(x)\leq 0\}.
\end{equation}

Take $(t,x)\in M\setminus \{0\}$. Then there exists the unique positive
$\lambda$ with $\lambda (t,x)\in B$. Thus $\lambda (t,x)=(-1,\lambda x).$
Hence, $\lambda t=-1$ and $\lambda x\in D$. Accordingly, $g(\lambda x)\leq 1$.
Putting $\lambda = -\frac{1}{t}$ in the last relation, we get 
$g(-\frac{1}{t} x) \leq 1$ and since $t$ is negative and $g$ 
is positive homogeneous, $g(x)\leq -t$, that is, $t+g(x)\leq 0.$ Hence, 
$(t,x)$ is in the second term of the equality (\ref{gauge}).

Conversely, if $t+g(x)< 0$, then $g(-\frac{1}{t} x)\leq 1$, hence
$-\frac{1}{t} x \in D$. Thus $(-1,-\frac{1}{t} x)\in B$ and
$-t(-1,-\frac{1}{t}x) = (t,x)\in M,$ which completes the 
proof of the relation (\ref{gauge}).

(d) Thus we have constructed the mutually polar, subabbitive
retractions $S$ and $T$ with the cone ranges $M$ and $N$
respectively. But then from Corollary \ref{foo1}, we must
have $Q=S$ and $R=T$. Hence, $Q$ and $R$ are booth 
subadditive, and the proof of item 1 of the theorem is complete.

(e) The proof of item 2 of the theorem follows by using Theorem \ref{foo},
Corollary \ref{foo1} and item 1 of the theorem.

\end{proof}


\subsection{Mutually polar subadditive retractions with
cone ranges with nonempty interiors}

To be in line with our above exposition, we give here without
proof the summary of results in Section 7 in \cite{NemethNemeth2020}:

\begin{theorem}\label{halo}

Suppose that $Q$ and $R$ are mutually polar proper retractions with
the cone ranges $M$ and $N$, respectively. If $\intr M\not=\emptyset$
and $\intr N \not=\emptyset$, then the following
conditions are equivalent:

\begin{enumerate}
\item $Q$ and $R$ are subadditive;
\item $Q$ and $R$ are isotone;
\item $M$ or $N$ are lattice cones,
$N=-M$, and $Q$, $R=I-Q$ are the positive part operators
in the vector lattices endowed by the positive cones $M$ and $N$, respectively.
\end{enumerate}
If $\dim H< \infty$ then besides the above equivalent properties we have
via the Theorem of Youdine the equivalent condition:
4. The cone range of $Q$ is a simplicial cone $K$ and the cone range
of $R$ is the simplicial cone $-K$.

\end{theorem}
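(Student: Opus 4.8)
The plan is to route all the equivalences through the single structural identity $N=-M$, extracting it from the order hypotheses and then reading off the lattice picture. The two implications issuing from (3) are the classical content of Example \ref{latt}: the positive part operator $x\mapsto x^{+}$ of a vector lattice is both isotone and subadditive. When $N=-M$ the order $\leq_{N}$ is exactly the reverse of $\leq_{M}$, so $R=I-Q=-{}^{-}$ transports these properties to $\leq_{N}$; explicitly, subadditivity of $R$ with respect to $\leq_{N}$ reads $Rx+Ry-R(x+y)\in N$, i.e. $x^{+}+y^{+}-(x+y)^{+}\in M$, which is precisely the subadditivity of ${}^{+}$, and isotonicity is obtained the same way. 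This settles (3) $\Rightarrow$ (1) and (3) $\Rightarrow$ (2).

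The heart of the matter is that each order hypothesis forces $N=-M$. Under (2) this is immediate from pointedness: if $w\in -M$ then $w\leq_{M}0$, so isotonicity of $Q$ gives $Qw\leq_{M}Q0=0$, whence $Qw\in M\cap(-M)=\{0\}$ and $w\in\ker Q=\rng R=N$; thus $-M\subseteq N$. The same argument applied to $R$, whose kernel is $M$, gives $-N\subseteq M$, i.e. $N\subseteq -M$, and therefore $N=-M$. Under (1) I would first rewrite the two subadditivities as $Qx+Qy-Q(x+y)\in M$ and, using $R=I-Q$, as $Q(x+y)-Qx-Qy\in N$, so that $Q(x+y)-Qx-Qy\in N\cap(-M)$ for all $x,y$. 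Taking $y=v\in N$ gives $Q(x+v)\leq_{M}Qx$ for every $v\in N$; restricting to a two dimensional plane through the transversal line furnished by Proposition \ref{eelso}, on which, by the transversality of $M,N$ and the uniqueness in Theorem \ref{foo}, the retractions are the explicit planar ones of Theorem \ref{ketdim}, the residual term $Q(x+y)-Qx-Qy$ sweeps out all of $N$ while being constrained to $-M$, forcing $N\subseteq -M$, and symmetrically $-M\subseteq N$. Making this planar reduction rigorous—in particular verifying that the interior hypotheses $\intr M\neq\emptyset$, $\intr N\neq\emptyset$ permit treating the retractions plane by plane—is the step I expect to be the main obstacle, since it is exactly where those interior hypotheses are essential.

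Granting $N=-M$, the conclusion (3) is uniform. For each $x$ we have $Qx\in M$, so $Qx\geq_{M}0$, and $Qx-x=-Rx\in -N=M$, so $Qx\geq_{M}x$; thus $Qx$ is a common upper bound of $0$ and $x$. If $w\geq_{M}0$ and $w\geq_{M}x$, then $w\in M$, hence $Qw=w$, and isotonicity of $Q$ yields $w=Qw\geq_{M}Qx$; here isotonicity holds under (2) directly, and under (1) it follows from $N=-M$, since for $x\leq_{M}y$ one has $x-y\in -M=\ker Q$, so $Qx=Q(y+(x-y))\leq_{M}Qy+Q(x-y)=Qy$. Consequently $Qx=\sup\{0,x\}=x^{+}$ exists for every $x$, and by translation invariance $a\vee b=a+(b-a)^{+}$ exists for all $a,b$; hence $M$ is a lattice cone, $Q$ is its positive part operator and $R=I-Q$ is the positive part operator of $(X,N)$, which is (3).

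Finally, in finite dimensions $M\subset\R^{m}$ is a lattice cone exactly when $(\R^{m},M)$ is a vector lattice with continuous operations, which by Youdine's Theorem \ref{Yu} happens if and only if $M$ is simplicial. Writing $K=M$, condition (3) becomes $\rng Q=K$ simplicial and $\rng R=-K$, i.e. condition (4); this gives (3) $\Leftrightarrow$ (4) and completes the scheme.
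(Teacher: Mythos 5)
First, a point of comparison: the paper itself contains \emph{no} proof of Theorem \ref{halo} --- it is explicitly given ``without proof'' as a summary of Section 7 of \cite{NemethNemeth2020} --- so your attempt can only be measured on its own merits. Much of it is sound. The implications (3) $\Rightarrow$ (1) and (3) $\Rightarrow$ (2) are indeed just the classical properties of the positive part operator transported through $N=-M$; your derivation of $N=-M$ from isotonicity, using pointedness of the proper cones and $\ker Q=\rng R=N$, $\ker R=\rng Q=M$, is correct; and, once $N=-M$ is in hand, the identification $Qx=\sup_M\{0,x\}$ and the finite-dimensional step via Youdine's Theorem \ref{Yu} go through (modulo the direction (4) $\Rightarrow$ (3), which you pass over in one clause and which is genuinely delicate, since the cone ranges alone do not pin down the pair $Q,R$ when both ranges are full-dimensional --- see below).

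The genuine gap is exactly where you flagged it: (1) $\Rightarrow$ $N=-M$, and your proposed repair cannot work as stated. The uniqueness you invoke from Theorem \ref{foo} holds only within the class of \emph{transversal} (plane-preserving) mutually polar retractions, and the only result promoting arbitrary mutually polar retractions to transversal ones is Proposition \ref{ell}/Corollary \ref{foo1}, which requires $\dim N=1$ --- precisely the hypothesis absent here. When both interiors are nonempty, mutually polar retractions need not preserve planes through a transversal line. Concretely: take $X=\R^3$, $M=\R^3_+$, $N=-\R^3_+$, $Q$ the positive part operator (a proper retraction by Example \ref{latt}), $\Delta=\spa\{(1,1,1)\}$ and $L=\spa\{(1,1,1),(1,-1,0)\}$; then $x=(3,-1,1)\in L$ but $Qx=(3,0,1)\notin L$. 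The same example shows that the plane-by-plane construction of Theorem \ref{foo}(i) produces a \emph{second} pair of mutually polar retractions with the identical ranges $\R^3_+$ and $-\R^3_+$, different from the lattice pair; hence no appeal to ``the retractions are determined by their ranges'' is available in the full-dimensional case, and the reduction to the planar classification of Theorem \ref{ketdim} collapses. Independently of this, the key assertion that the residual $Q(x+y)-Qx-Qy$ ``sweeps out all of $N$'' is unsupported: knowing only that all residuals lie in $N\cap(-M)$ is perfectly compatible with their lying in a small subcone, and by itself forces nothing about $N\subseteq -M$. What actually drives the planar case is a different mechanism: subadditivity of a continuous positively homogeneous map is a convexity condition on the component functionals of $Q$, and convexity at the kinks of $Q$ along $\bdr M$ forces the generators of $N$ into $-M$ (symmetrically, subadditivity of $R$ forces $-M\subseteq N$); some form of that computation, carried out either directly in $X$ or after an honest reduction, is what must replace your sweeping argument before the chain (1) $\Rightarrow$ (3) is established.
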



\subsection{Metric projection on the cone with order theoretic
properties}

Some results in Section 8 of \cite{NemethNemeth2020} can be
gathered as follows:

\begin{theorem}\label{mprj}

If $K$ is a nonempty generating closed cone in the separable Hilbert space,
then for the metric projection $P$ onto $K$ and its polar $I-P$ we have:
\begin{enumerate}
\item If $P$ is subadditive, then $I-P$ is isotone;
\item If $P$ is isotone, then $I-P$ is subadditive;
\item $P$ and $I-P$ are subadditive (isotone) if and
only if $(H,K)$ is a Hilbert vector lattice;
\item If $\dim H<\infty$ the last condition is equivalent
with the condition that $K$ is the positive
orthant of a Cartesian reference system.
\end{enumerate}

\end{theorem}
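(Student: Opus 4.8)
The plan is to treat the four assertions as a single package built on \emph{Moreau's theorem}, which identifies $I-P$ with the metric projection $P_{K^\circ}$ onto the polar cone; consequently $P$ and $I-P$ are mutually polar retractions with cone ranges $K$ and $K^\circ$, and the whole configuration is symmetric under the involution $K\leftrightarrow K^\circ$ (recall $K^{\circ\circ}=K$ as $K$ is a closed convex cone). This symmetry is the organising principle: if I can prove the single equivalence that $P=P_K$ is subadditive (w.r.t.\ $\le_K$) \emph{if and only if} $I-P=P_{K^\circ}$ is isotone (w.r.t.\ $\le_{K^\circ}$), then applying the same statement to $K^\circ$ in place of $K$ and relabelling yields the companion equivalence $P$ isotone $\iff I-P$ subadditive. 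Items (1) and (2) are then one implication of each equivalence, and the parenthetical alternative in item (3) is justified, since both retractions are subadditive iff both are isotone.

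The heart of the argument is therefore this equivalence, which I would establish through the two tools attached to a metric projection: the orthogonal Moreau splitting $x=Px+(I-P)x$ with $\langle Px,(I-P)x\rangle=0$, and the variational inequalities $\langle w-P_Kw,\,v-P_Kw\rangle\le 0$ for $v\in K$ together with its analogue for $P_{K^\circ}$. Writing isotonicity of $I-P$ as $(I-P)(x+z)-(I-P)x\in K^\circ$ for every $x$ and every $z\in K^\circ$, and using $Pz=0$ for $z\in K^\circ$, subadditivity of $P$ supplies $Px-P(x+z)\in K$; the task is to upgrade this cone membership, together with $z\in K^\circ$ and the orthogonality relations, to the polar membership $z+Px-P(x+z)\in K^\circ$ by testing against arbitrary $v\in K$. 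I expect this conversion of inner-product inequalities into cone membership --- equivalently, the recognition of $K^\circ$ as an isotone projection cone --- to be the main obstacle, and the step where the specific metric (rather than merely order-theoretic) structure of the projection is indispensable.

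For item (3) I would run the forward direction through Theorem \ref{halo}. Assuming both $P$ and $I-P$ subadditive, Theorem \ref{halo} (in the regular case where both cones have interior; the general separable case I would reduce to this or handle directly with the orthogonal splitting) forces $K^\circ=-K$, i.e.\ $K$ \emph{self-dual}, together with $K$ a lattice cone and $P$ the positive-part operator $^+$ of the vector lattice $(H,K)$. The two remaining Hilbert-lattice norm axioms of Definition \ref{HVL} then come for free from orthogonality: since $Px=x^+\in K$ and $-(I-P)x=x^-\in K$ are the positive and negative parts with $\langle x^+,x^-\rangle=-\langle Px,(I-P)x\rangle=0$, one gets $\||x|\|^2=\|x^++x^-\|^2=\|x^+-x^-\|^2=\|x\|^2$ and, similarly, the monotonicity $0\le x\le y\Rightarrow\|x\|\le\|y\|$. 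Conversely, in a Hilbert vector lattice $K$ is self-dual and the norm axiom $\||x|\|=\|x\|$ gives, by polarisation, $x^+\perp x^-$; since moreover $-x^-\in -K=K^\circ$, the Moreau characterisation identifies $x^+$ as the nearest point of $K$ to $x$, i.e.\ $P={}^+$, and Example \ref{latt} then says that $^+$ and $-\,^-$ are subadditive and isotone, giving the reverse implication.

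Finally, item (4) specialises item (3) to $\dim H<\infty$ via \emph{Youdine's theorem} (Theorem \ref{Yu}): a lattice cone in $\R^m$ is exactly a simplicial cone. It remains to see that a \emph{self-dual} simplicial cone carrying the Hilbert-lattice norm is a coordinate orthant. Here the norm axiom $\||x|\|=\|x\|$ is equivalent, by polarisation, to orthogonality of every pair of disjoint elements; applied to the generators of the simplicial cone, which are pairwise disjoint atoms, this makes them pairwise orthogonal, so after normalising they form an orthonormal basis and $K$ is the nonnegative orthant of the corresponding Cartesian reference system. The converse is immediate, since for the orthant the projection is the coordinatewise positive part. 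The only delicate point I anticipate here is justifying that the lattice generators are mutually disjoint and that orthogonality of disjoint elements indeed pins down the orthonormal frame.
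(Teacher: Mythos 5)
First, a note on the comparison itself: the paper contains no proof of Theorem \ref{mprj} at all --- it is explicitly reproduced without proof from Section 8 of \cite{NemethNemeth2020} (see the Introduction and the sentence opening Subsection 6.3) --- so your proposal can only be judged on its own terms, and on those terms it is incomplete. Your organising reduction is fine: by Moreau's theorem $I-P=P_{K^\circ}$ and $K^{\circ\circ}=K$, so the equivalence ``$P_K$ subadditive $\iff P_{K^\circ}$ isotone'', applied to both $K$ and $K^\circ$, would indeed yield items 1 and 2. But you never prove either direction of that equivalence. Your own text sets up exactly what must be shown --- for $z\in K^\circ$, subadditivity plus $Pz=0$ gives $d:=Px-P(x+z)\in K$, and one needs $z+d\in K^\circ$ --- and then stops, declaring this ``conversion of inner-product inequalities into cone membership'' to be ``the main obstacle.'' That obstacle \emph{is} the theorem: knowing $d\in K$ and $z\in K^\circ$ gives nothing when tested against an arbitrary $v\in K$, since $\langle d,v\rangle$ may well be positive, so the variational inequalities characterizing the two projections must be exploited in an essential way, and no such argument appears. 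Items 1 and 2 are therefore asserted, not proved.

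The forward half of item 3 has a gap of a different kind: you invoke Theorem \ref{halo}, but that theorem is stated only for mutually polar retractions whose cone ranges both have nonempty interior, and the hypotheses of Theorem \ref{mprj} do not provide this. The gap is not hypothetical: the positive cone of $\ell^2$ (or $L^2$) is generating, closed, self-dual, and makes $H$ a Hilbert vector lattice --- exactly the situation item 3 describes --- yet it has empty interior, so the case you defer with ``the general separable case I would reduce to this or handle directly'' is the typical infinite-dimensional case, not a degenerate one. By contrast, the parts you do carry out are sound: deriving the two norm axioms of Definition \ref{HVL} from Moreau orthogonality and self-duality; the converse of item 3 via polarisation, $-x^-\in K^\circ$, and the uniqueness part of Moreau's decomposition together with Example \ref{latt}; and item 4 via Youdine's theorem plus pairwise disjointness (hence orthogonality) of the simplicial generators. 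So the skeleton is reasonable, but the load-bearing steps --- both implications behind items 1 and 2, and item 3's forward direction outside the interior-point case --- are missing.
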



\vspace{2mm}

\end{document}